\theoremstyle{plain}
\newtheorem{theorem}{Theorem}[section]
\newtheorem{conjecture}[theorem]{Conjecture}
\theoremstyle{definition}
\newtheorem{definition}[theorem]{Definition}
\newtheorem{question}[theorem]{Question}
\theoremstyle{remark}
\newcommand{\eps}{\varepsilon}
\newcommand{\F}{\mathcal{F}}
\newcommand{\C}{\mathbb{C}}
\newcommand{\R}{\mathbb{R}}
\newcommand{\Z}{\mathbb{Z}}
\newcommand{\N}{\mathbb{N}}
\DeclareMathAccent{\widehat}{\mathord}{largesymbols}{"62}
\title{New Obstacles to Multiple Recurrence}
\author{
Ryan Alweiss
\thanks{Department of Pure Mathematics and Mathematical Statistics and Trinity College, University of Cambridge.
Email: {\tt ra699@cam.ac.uk}.
Research supported by an NSF Mathematical Sciences Postdoctoral Fellowship.}\\
}
\begin{document}
\maketitle


\abstract{We show that there is a set which is not a set of multiple recurrence despite being a set of recurrence for nil-Bohr sets.  This answers Huang, Shao, and Ye's \enquote{higher-order} version of Katznelson's Question on Bohr recurrence and topological recurrence in the negative.  Equivalently, we construct a set $S$ so that there is a finite coloring of $\N$ without three-term arithmetic progressions with common differences in $S$, but so that $S$ lacks the usual polynomial obstacles to arithmetic progressions.}

\section{Introduction}~\label{section:introduction}

A central and long-standing open problem in the field of topological dynamics is \enquote{Katznelson's Question}, on whether a set of Bohr recurrence is a set of topological recurrence.  This question was raised by a 1968 paper of Veech \cite{veech} and implicitly before that in a 1954 paper of F{\o}lner \cite{folner}.  However, in a classic example of Stigler's law, it is often referred to as \enquote{Katznelson's question}, because Katznelson popularized the question and wrote a beautiful paper \cite{katznelson} connecting it to the chromatic number of abelian Cayley graphs.  In this paper, via a Furstenberg correspondence principle type argument in symbolic dynamics, he shows that Bohr recurrence is equivalent to topological recurrence if and only if every abelian Cayley graph with finite chromatic number has a finite coloring via Bohr sets.

Let us recall the equivalent dynamical and combinatorial definitions of Bohr and topological recurrence, and some basics of topological dynamics.  These appear widely in the literature (see for instance \cite{katznelson} \cite{jg} \cite{gkr}). Recall that $||x||_{\R/\Z}=\min(\{x\},1-\{x\})$ is the distance between $x$ and the nearest integer.

\begin{definition} [Bohr recurrence, combinatorial definition]

A set $S \subset \Z$ is a set of Bohr recurrence if and only if for all nonzero $\alpha_1, \cdots, \alpha_k \in \R$ and all $\eps>0$ there exists some $s \in S$ so that for all $1 \le i \le k$, $||\alpha_i s||_{\R/\Z}<\eps$.

\end{definition}

We will also formally define a Bohr set.

\begin{definition}[Bohr set] A Bohr set is a set $S$ parameterized by a $k$-tuple of reals $(\alpha_1, \cdots, \alpha_k)$ and $\eps$, consisting of the $s$ so that $||\alpha_i s||_{\R/\Z}<\eps$ for all $1 \le i \le k$.  
\end{definition}

The parameter $k$ is called the \emph{dimension} and $\eps$ is the \emph{width}.  Later we will define a complexity of a nil-Bohr set, which will be $\Theta(\max(k,1/\eps))$ for ordinary Bohr sets.

Equivalently a set is a set of Bohr recurrence if it is not contained in the complement of a Bohr set, or equivalently in a finite union of complements of one-dimensional Bohr sets.  Sometimes this is referred to as a \enquote{$Bohr_0$ set} in the literature, with the term Bohr set referring to the more general class of translates of $Bohr_0$ sets.  Here we will only deal with $Bohr_0$ sets, so we will refer to them simply as Bohr sets.

\begin{definition} [Bohr recurrence, dynamical definition]

A set $S \subset \Z$ is a set of Bohr recurrence if and only if for all $k \in \N$ and all rotations $T(x_1, \cdots, x_k)=(x_1+\alpha_1, \cdots, x_k+ \alpha_k)$ on the torus $(\R/\Z)^k$, there exists a point $x \in (\R/\Z)^k$ and a subsequence $S'$ of $n \in S$ so that $\lim_{n \in S'} T^n(x)=x$.

\end{definition}

The two definitions are readily seen to be equivalent.  Since torus rotations are isometries, if $T^nx \to x$ is true for a single point then it is true for all points, so it would be equivalent to change the quantifier on $x$ from \enquote{there exists} to \enquote{for all}.

We only consider $S$ with $0 \notin S$, as if $0 \in S$ then all notions of recurrence are trivial.  In general dealing with $S \subset \N$ is equivalent to dealing with $S \cup -S \subset \Z$, so we will assume for the remainder of this paper that $S \subset \N$.  This is important, because a general topological system is not necessarily invertible, so $T^{-1}$ may not make sense.  

Recall that if $S$ is a subset of a group $G$, the Cayley graph $Cay(G,S)$ on an abelian group $G$ has all elements of $G$ as vertices and $a,b$ in $G$ are adjacent if and only if $a-b \in (S \cup -S)$.  Recall also that the chromatic number of a graph is the minimal number of colors needed to color the vertices so that no two vertices of the same color are adjacent.

\begin{definition} [Topological recurrence, combinatorial definition]

A set $S \subset \N$ is a set of topological recurrence if and only if the Cayley graph $Cay(\Z,S)$ on $\Z$ generated by $S$ has infinite chromatic number.

\end{definition}

Topological recurrence is the most general form of recurrence, applying to all dynamical systems.

\begin{definition} [Topological recurrence, dynamical definition]

A set $S \subset \N$ is a set of topological recurrence if for every compact dynamical system $(X,T)$, where $X$ is a compact metric space and $T$ is a continuous map on $X$, there exists a point $x \in X$ and a subsequence $S'$ of $S$ so that $\lim_{n \in S'} T^nx=x$.

\end{definition}

It was Katznelson \cite{katznelson} who proved the equivalence of these two definitions of topological recurrence, giving the eponymous question its combinatorial form.  One direction is relatively simple.  If a set $S$ is not a set of dynamical topological recurrence, then one can use the dynamical system to come up with a coloring of the Cayley graph.  In particular there will be some $\eps$ so that $d(T^nx,x)>\eps$ for all $n \in S$.  Now cover the compact space by $\eps$-balls for a suitably small $\eps$, and color $m \in \N$ by which $\eps$-ball $T^m(x)$ ends up in.  The other direction, using a coloring to construct a dynamical system, follows from an argument of Katznelson that makes use of symbolic dynamics, and bears some resemblance to the Furstenberg correspondence principle.

A set of topological recurrence must be a set of Bohr recurrence.  This is readily seen both from the topological definition, as a circle rotation is a special kind of dynamical system, and from the combinatorial version, as if $S$ a set that is not a set of Bohr recurrence it is easy to construct a finite coloring by taking all of the $n\alpha_i$ modulo $1$, up to an error of $\eps$.  Hence we can state Katznelson's question explicitly as follows.

\begin{question}[Katznelson's question]

If $S$ is a set of Bohr recurrence, then is $S$ necessarily a set of topological recurrence?
	
\end{question}

In \cite{gkr} it is stated that there is \enquote{no consensus among experts as to the expected answer}.

No discussion of the history of Katznelson's question would be complete without also referencing the \enquote{Related Problem}.  Bergelson asked whether there is a set which is a set of topological recurrence but not a set of measurable recurrence, or equivalently whether Cayley graph on the integers with a positive upper density independent set, but infinite chromatic number, and K\v{r}\'{\i}\v{z} \cite{kriz} resolved this question in the negative.  His construction is based on the boolean Cayley graph over $(\Z/2\Z)^n$ generated by elements of Hamming weight $>n-10\sqrt{n}$.  One can check that elements of Hamming weight $\le n/2-10\sqrt{n}$ form a positive density independent set in this graph.  Furthermore, looking at the elements of Hamming weight $n/2-6\sqrt{n}$, this graph has a copy of the Kneser graph $KG(n, n/2-6\sqrt{n})$ and thus has chromatic number at least $\sqrt{n}$, and goes to infinity with $n$.  K\v{r}\'{\i}\v{z} projects this construction to the integers.  See \cite{jkriz} for an exposition of his example by Griesmer and \cite{jquad} for an extension.

In this paper, we will be concerned with the natural higher-order version of Katznelson's question.  This was first explicitly asked by Huang, Shao, and Ye \cite{hsy1}, is the main subject of their book \cite{hsy}, and was asked again as Question 7 in \cite{gkr}.  Before stating it, we need to define topological multiple recurrence.

\begin{definition}[Topological multiple recurrence, dynamical definition]
A set $S \subset \N$ is a set of topological $d$-recurrence if for every compact dynamical system $(X,T)$, where $X$ is a compact metric space and $T$ is a continuous map on $X$, there exists a point $x \in X$ and a subsequence $S'$ of $S$ so that for all $1 \le d' \le d$, $\lim_{n \in S'} T^{d'n}x=x$.
\end{definition}

In other words, $T^nx, T^{2n}x, \cdots, T^{dn}x$ all go to $x$ along this sequence $S'$ of $n$.  So we have $T^nx \to x, T^{2n}x \to x, \cdots, T^{dn}x \to x$ simultaneously along this sequence $S'$.  For $d=1$, this is the ordinary notion of recurrence.  

\begin{definition}[Topological multiple recurrence, combinatorial definition]
A set $S \subset \N$ is a set of topological $d$-recurrence if for every finite coloring of $\N$, there is a monochromatic $(d+1)$-term arithmetic progression with common difference in $S$.
\end{definition}

Equivalently, the $(d+1)$-hypergraph with edges $(x, x+s, \cdots, x+ds)$ for $s \in S$ has infinite chromatic number.  In other words, restricted van der Waerden with differences in $S$ is true for $(d+1)$-term arithmetic progressions.  For $d=1$ this is readily seen to coincide with the Cayley graph definition.  As before, this combinatorial definition is equivalent to the dynamical one, see for instance \cite{kra}, Theorem 2.5.  In \cite{hsy}, the notation $\F_{Bir_d}$ is used for the family of sets of topological $d$-recurrence.

For restricted van der Waerden, on top of the usual obstacles in Katznelson's question, there are higher degree obstacles.  If $s$ is far from a multiple of an irrational number like $\pi$, then $x, x+s, x+2s$ cannot all be too close modulo $\pi$.  If $2s^2$ is far from a multiple of $\pi$, then because $x^2-2(x+s)^2+(x+2s)^2=2s^2$, the squares of $x, x+s, x+2s$ cannot all be very close to each other modulo $\pi$ as well, which yields a finite coloring where $x$ is colored by $x^2$ modulo $\pi$ up to a small error.  In general there will be polynomial obstacles up to degree $d$.  So the natural question here is whether there are any other obstacles, with the caveat that one has to also worry about \enquote{generalized polynomials} or \enquote{bracket polynomials} like $s \lfloor s \rfloor$.  These generalized polynomials are formally defined in \cite{hsy}, but we will not define them yet.

In the dynamics literature, these correspond to nilmanifolds.  Multiple recurrence for Bohr sets is equivalent to single recurrence, but there is a higher-order notion of Bohr recurrence called \enquote{nil-Bohr recurrence}.  It was originally defined dynamically in \cite{hk} and is the main object of study in \cite{hsy}.  We state it here for completeness.  For a formal definition of a ($d$-step) nilsystem, one can for instance consult \cite{hsy}, but again it is not so crucial for the present paper.

\begin{definition}[nil-Bohr multiple recurrence, dynamical definition]

A set $S \subset \N$ is a set of $Nil_d$-Bohr recurrence if for every $d$-step nil-system $(X,T)$ there is a point $x \in X$ and a subsequence $S' \subset S$ so that for all $1 \le d' \le d$, $x=\lim_{n \in S'}T^{d'n}x$
	
\end{definition}

As usual, there is also a combinatorial definition.  Again, we state it here for completeness. 

\begin{definition}[nil-Bohr multiple recurrence, combinatorial definition]

A set $S \subset \N$ is a set of $Nil_d$-Bohr recurrence if for every finite family of generalized polynomials $P_1, \cdots, P_k$ with degree at most $d$ and every $\eps$, there exist $s \in S$ so that $||P_i(s)||_{\R/\Z}<\eps$ for all $1 \le i \le k$.
	
\end{definition}

The set of $n$ so that these generalized polynomials have small $\R/\Z$ is called a \enquote{Nil-Bohr} set, or a \enquote{$Nil_d$-Bohr} set when we want to emphasize the parameter of $d$.  An ordinary Bohr set is just a $Nil_1$-Bohr set.

The equivalence of the definitions is Theorem B of \cite{hsy}, where the notation $\F_{d,0}$ is used for the family of $Nil_d$-Bohr sets and $\F_{d,0}^{*}$ for the family of sets of $Nil_d$-Bohr recurrence.  Again, we will not formally define \enquote{generalized polynomial}, but it is essentially just a polynomial with floors and ceilings, often also referred to as a \enquote{bracket polynomial} in the literature (see for instance \cite{hsy}, \cite{leibman}).  The \enquote{degree} of a generalized polynomial is just the degree in $s$ ignoring the floors and ceilings, so for instance $s \lfloor s \rfloor$ will have degree $2$.

Huang, Shao, and Ye also prove that one can also equivalently ask for the generalized polynomials $P_i$ to be \enquote{special} generalized polynomials of an even simpler form.  We will define these.  First, following Section 4 of \cite{hsy} we use the notation $[x]:=\lfloor x+1/2 \rfloor$ for the nearest integer function.  

\begin{definition}[Special Generalized Polynomials, after \cite{hsy}]

We define $L$ so that $L(x)=x$ and $L(x_1,\dots,x_\ell)=x_1[L(x_2,\dots,x_\ell)].$

Then the special generalized polynomials of degree at most $d$ are the
expressions
\[
L(n^{j_1}a_1,\dots,n^{j_\ell}a_\ell)
\]
such that $j_i\in\mathbb{N}$ and $a_i\in\mathbb{R}$ for $1\le i\le\ell$,
and $\sum_{i=1}^{\ell} j_i \le d$.

\end{definition}

In particular, we must have $\ell \le d$.

\begin{definition}[nil-Bohr multiple recurrence, simple combinatorial definition]

A set $S \subset \N$ is a set of $Nil_d$-Bohr recurrence if for every finite family of special generalized polynomials $P_1, \cdots, P_k$ with degree at most $d$ and every $\eps$, there exist $s \in S$ so that $||P_i(s)||_{\R/\Z}<\eps$ for all $1 \le i \le k$.
	
\end{definition}

We have for instance that the set of $n$ so that $1/4<\{n^2\alpha\}<3/4$ and the set of $n$ so that $1/4<\{ \alpha n[n] \}$ are not sets of $Nil_2$-Bohr recurrence, even though they are sets of Bohr recurrence.

The equivalence of the simple and ordinary combinatorial definitions of nil-Bohr multiple recurrence is Theorem 4.2.11 of \cite{hsy}.  In our proof, we will instead use a somewhat different form of \enquote{special generalized polynomial} from \cite{leibman}.  Leibman proves in his paper that these have the property that they jointly equi-distribute.

The natural higher-order analogue of Katznelson's question is whether topological multiple recurrence is equivalent to nil-Bohr multiple recurrence, and specifically whether topological $d$-recurrence is equivalent to $Nil_d$-Bohr recurrence.  This is Question B, the main question asked in \cite{hsy}, which in formulation B-III asks whether $\F_{Bir_d}=\F^{*}_{d,0}$.  In this paper, we answer this question in the negative.

\begin{theorem}~\label{thm:main}	 There is a set $S \subset \N$ which is not a set of topological $2$-recurrence, but is a set of $Nil_2$-Bohr recurrence.
\end{theorem}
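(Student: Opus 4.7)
The plan is to extend K\v{r}\'{\i}\v{z}'s finite-coloring construction from Cayley graphs on $\Z$ to the $3$-uniform hypergraph encoding $3$-APs, while simultaneously arranging that the common-difference set $S$ meets every $Nil_2$-Bohr set. The two tasks pull in opposite directions and must be balanced against each other.

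\emph{Step 1 (combinatorial core).} I would build, inside a suitable finite abelian group $G$, a set $T\subset G$ such that: (a) the $3$-uniform hypergraph on $G$ with hyperedges $\{x,x+t,x+2t\}_{t\in T}$ has chromatic number at least $\chi$ for $\chi$ arbitrarily large; (b) $T$ is ``fat'' enough that averages over $T$ of bounded-complexity test functions on $G$ track the averages over $G$ up to small error. The natural candidate is a Kneser-type configuration in $G=(\Z/p\Z)^n$ for large $p,n$, with $T$ defined by a weight condition or a slice of a coordinate projection chosen so that $3$-APs with difference in $T$ impose enough rigid constraints on a coloring to force many colors.

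\emph{Step 2 (transfer to $\Z$).} Take a sequence $(G_i,T_i,\chi_i)$ with $\chi_i\to\infty$, choose a homomorphism $\phi_i:\Z\to G_i$ with sufficiently equidistributed image, and set $S=\bigcup_i\phi_i^{-1}(T_i)$ (intersected with an appropriate growing window). Pulling back the bad colorings of the $G_i$ by the $\phi_i$ and combining them as a product coloring yields a finite coloring of $\N$ with no monochromatic $3$-AP whose common difference lies in $S$, so $S$ is not a set of topological $2$-recurrence.

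\emph{Step 3 ($Nil_2$-Bohr recurrence).} Using the \enquote{simple} combinatorial definition of Huang--Shao--Ye, it suffices to show that for every finite family $P_1,\dots,P_k$ of special generalized polynomials of degree at most $2$ and every $\eps>0$, some $s\in S$ satisfies $\|P_j(s)\|_{\R/\Z}<\eps$ for all $j$. Apply Leibman's joint equidistribution theorem for his variant of special generalized polynomials along the orbit $(\phi_i(n))_{n\in\N}$; combined with property (b) of $T_i$, this forces $\phi_i^{-1}(T_i)$ to hit the $Nil_2$-Bohr set cut out by $(P_1,\dots,P_k,\eps)$ once $i$ is large enough.

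The main obstacle is Step 1 in conjunction with property (b): high hypergraph chromatic number typically demands that $T$ be algebraically rigid or sparse in certain directions, while $Nil_2$-Bohr recurrence demands that $T$ be equidistributed with respect to every bounded-complexity quadratic generalized polynomial test. Reconciling these is the crux of the construction, and the decisive tool is Leibman's formulation of special generalized polynomials, which supplies \emph{joint} rather than merely individual equidistribution and thereby allows Step 3 to absorb any finite family of obstructions uniformly.
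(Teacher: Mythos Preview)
Your proposal has the central inequality in Step~1(a) pointing the wrong way, and this error propagates through Steps~2 and~3. To show that $S$ is \emph{not} a set of topological $2$-recurrence you need a \emph{finite} coloring of $\N$ with no monochromatic $3$-AP whose difference lies in $S$; equivalently, the $3$-AP hypergraph must have \emph{bounded} chromatic number. Asking that the hypergraph on $G_i$ have chromatic number at least $\chi_i\to\infty$ is the Kneser/K\v{r}\'{\i}\v{z} direction (high chromatic number), which is exactly the opposite of what is required here. With the sign reversed, your Step~2 collapses: a product of proper colorings of the $G_i$ uses unboundedly many colors and is not a finite coloring of $\N$, so you do not obtain a witness to non-$2$-recurrence.

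Even after correcting the direction, the proposal contains no construction: Step~1 with the corrected condition (a $T\subset G$ whose $3$-AP hypergraph has uniformly bounded chromatic number while $T$ is quadratically equidistributed) \emph{is} the whole problem, and a Kneser-type slice in $(\Z/p\Z)^n$ does not supply it. The paper in fact points out why finite fields are hostile here: over $\Z/p$ the map $n\mapsto n^2$ is determined by $n$, so a sum like $\sum_i (na_i\bmod p)^2$ collapses to $n^2\sum_i a_i^2\bmod p$, i.e.\ a genuine degree-$2$ obstruction, and one has no useful norm to play the role of $\|\cdot\|_{\R/\Z}$. The paper's actual mechanism is entirely different and lives in $\ell^1(\N;\R/\Z)$: the set $S$ is cut out by purely \emph{linear} Bohr conditions on the coordinates $\{n\alpha_i\}$, and the coloring is by $\sum_i(e(\{n\alpha_i\})-1)\in\C/\Z[i]$ up to a small error. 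The key idea is that the second difference $f(x)-2f(x+s)+f(x+2s)$ picks up the curvature (sagitta) of the unit circle, giving a contribution of size $\Theta(\delta_1^2)$ from the one large coordinate and only $O(\delta_1\delta_2)$ from the rest, so monochromatic $3$-APs are forbidden. Because this ``fake quadratic'' comes from an infinite-dimensional bracket expression $\sum_i\{n\alpha_i\}^2$ rather than any finite generalized polynomial, it creates no $Nil_2$-Bohr obstruction; Leibman's joint equidistribution is then used, as you suggest in Step~3, but only to mop up the finitely many genuine linear dependencies between the $\{n\alpha_i\}$ and the basic generalized polynomials, not to compensate for a missing combinatorial core.
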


We really only need that it is a set of $Nil_2$-Bohr recurrence, but it turns out to be no more difficult to also get this for $d>2$.  The same proof works.

\begin{theorem}~\label{thm:general}There is a set $S \subset \N$ which is not a set of topological $2$-recurrence, but is a set of $Nil_d$-Bohr recurrence for every $d$.
\end{theorem}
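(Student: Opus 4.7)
The plan is to construct $S \subset \N$ together with a finite coloring $c : \N \to [K]$ such that no $3$-AP $x, x+s, x+2s$ with $s \in S$ is monochromatic (witnessing that $S$ is not a set of topological $2$-recurrence), while $S$ still meets every $Nil_d$-Bohr set for every $d$. The coloring $c$ will be engineered from a sequence of degree-$2$ phase functions $\alpha_k x^2$ at a rapidly growing sequence of scales, so that the \enquote{bad} set $B := \{s \in \N : \exists\, x,\ c(x)=c(x+s)=c(x+2s)\}$ fails to contain any $Nil_d$-Bohr set; one then takes $S := \N \setminus B$.

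Concretely, fix a very fast-growing scale sequence $N_0 < N_1 < \cdots$ (with $N_{k+1}/N_k \to \infty$), an auxiliary integer $M$, and reals $(\alpha_k)$ to be chosen. On $x \in I_k := [N_k, N_{k+1})$, define $c(x)$ using a finite refinement of $\alpha_k x^2 \bmod 1$ combined with a bounded amount of scale information (and a dedicated color class for \enquote{transition regions} near scale boundaries). The first substantive step is to show that any monochromatic 3-AP must have all three vertices in the same $I_k$, which reduces the analysis of $B$ to \emph{within-scale} 3-APs. Using the identity $x^2 - 2(x+s)^2 + (x+2s)^2 = 2s^2$, within-scale monochromaticity forces $\|\alpha_k s^2\|_{\R/\Z} < 1/M$, so $B$ is contained in $\bigcup_k \{s \in [1, \delta N_k]\,:\, \|\alpha_k s^2\|_{\R/\Z} < 1/M\}$ for a small constant $\delta > 0$.

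To verify $Nil_d$-Bohr recurrence, enumerate the countably many tests $(P_1,\ldots,P_r, \eps)$ where the $P_i$ are special generalized polynomials (in Leibman's form) of finite degree with rational coefficients and $\eps > 0$ is rational. By Leibman's joint equidistribution theorem, the tuple $(P_1(s),\ldots,P_r(s))$ equidistributes on a subtorus of $(\R/\Z)^r$. For $\alpha_k$ chosen sufficiently generic with respect to the test, the enhanced tuple $(P_1(s),\ldots,P_r(s), \alpha_k s^2)$ also equidistributes, so a positive proportion of $s \in [1, \delta N_k]$ simultaneously satisfies $\|P_i(s)\|_{\R/\Z} < \eps$ for all $i$ and $\|\alpha_k s^2\|_{\R/\Z} \ge 1/M$. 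A diagonal argument, choosing $\alpha_k$ for large $k$ to handle the $k$-th enumerated test while respecting earlier commitments, produces a good $(\alpha_k)$ sequence.

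The main obstacle is the cross-scale analysis in the first step: ruling out monochromatic 3-APs whose vertices straddle multiple $I_k$'s. A naive bounded scale coloring still admits 3-APs spanning scales whose indices happen to agree modulo the palette size, so some finer mechanism---such as dedicating transition windows near each $N_k$ to a sink color, or arranging that the phase functions at adjacent scales do not cancel on the relevant $x$---is needed. A secondary difficulty is arranging $(\alpha_k)$ to be jointly generic with every nil-Bohr test; this is exactly where Leibman's joint equidistribution of special generalized polynomials, rather than the formulation of Huang--Shao--Ye, becomes essential, since it delivers the equidistribution statement over scale windows needed to complete the diagonal construction.
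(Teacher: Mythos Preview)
Your approach has a genuine gap at its core. You build the coloring out of \emph{true} degree-$2$ phases $\alpha_k x^2$, so the within-scale obstruction to a monochromatic $3$-AP is exactly a $Nil_2$-Bohr condition: a difference $s$ avoids $B_k$ only if $\|\alpha_k s^2\|_{\R/\Z}\ge 1/M$. But any fixed $s$ eventually fits inside every scale window $I_k$ once $N_{k+1}-N_k>2s$, so membership in $S=\N\setminus B$ forces $\|\alpha_k s^2\|_{\R/\Z}\ge 1/M$ for \emph{all} sufficiently large $k$. Your diagonal step ``choose $\alpha_k$ respecting earlier commitments'' therefore requires, at stage $k$, an $\alpha_k\in\R/\Z$ with $\|\alpha_k s_j^2\|\ge 1/M$ for every previously committed $s_1,\dots,s_{k-1}$. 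Each such constraint removes a set of measure $2/M$, and by Weyl equidistribution of $(\alpha j^2)_j$ for irrational $\alpha$, once $k$ is large compared to the fixed palette parameter $M$ no such $\alpha_k$ exists. So either $S$ is empty or the construction aborts; in any case, $S$ sits inside each complement $\{s:\|\alpha_k s^2\|\ge 1/M\}$, which is already a failure of $Nil_2$-Bohr recurrence.

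This is not a technicality but the heart of the problem: a coloring that forbids $3$-APs via genuine quadratic phases can only produce the ``obvious'' quadratic obstructions the theorem is meant to rule out. The paper's argument avoids this trap by working in $\ell^1(\N;\R/\Z)$ and coloring by $f(x)=\sum_i\bigl(e(a_i)-1\bigr)\in\C/\Z[i]$. The second difference $f(x)-2f(x+s)+f(x+2s)$ is bounded below by $\Theta(\delta_1^2)$ via the \emph{curvature} (sagitta) of the unit circle, not via the identity $x^2-2(x+s)^2+(x+2s)^2=2s^2$; crucially this ``fake quadratic'' is an infinite sum of bracket terms that does not reduce to any finite-dimensional generalized polynomial. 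The set $S$ itself is cut out by purely \emph{linear} Bohr conditions on $(\{n\alpha_i\})_i$, and it is precisely this linearity that lets Leibman's joint equidistribution separate $S$ from every higher-degree nil-Bohr test. Your invocation of Leibman is in the right spirit, but it cannot rescue a construction whose forbidden-difference set is itself defined by degree-$2$ constraints.
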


Equivalently, we find a set $S$ so that restricted van der Waerden with common difference $s \in S$ is false even for three-term progressions, but $S$ intersects all $Nil_d$-Bohr sets, and so it does not have quadratic obstacles like $\{s: ||s^2\alpha||_{\R/\Z}>\eps\}$ (or higher degree obstacles like $\{s: ||s^3\alpha||_{\R/\Z}>\eps\}$) that would give an \enquote{obvious obstruction}.

%

\section{Proof of~\Cref{thm:general}}

To prove~\Cref{thm:general} we must ultimately construct a set $S \subset \N$.  However, we will first construct a set $S$ in a different setting, and consider an appropriate projection.

Recall that $\ell^1(\N; \R/\Z)$ consists of sequences $(a_1, a_2, a_3, \cdots)$ of elements of $\R/\Z$ so that the $\ell^1$ norm $\sum_{i=1}^{\infty}||a_i||_{\R/\Z}$ is less than $\infty$.  This is an abelian group endowed with the usual addition operation.

The key fact about this group $\ell^1(\N; \R/\Z)$ that we will exploit is that it is not a Hilbert space and so has no true \enquote{quadratic structure} (and similarly, no true cubic structure or quartic structure and so on).  While one can define the square of the $\ell^2$ norm to be $\sum_{i=1}^{\infty}||a_i||^2_{\R/\Z}$ which will be bounded by the usual $\ell^1$ norm $\sum_{i=1}^{\infty}||a_i||_{\R/\Z}$ (and similarly one can define other $\ell^p$ norms), one can check that the parallelogram law $||x+d||^2_{\ell^2}-2||x||^2_{\ell_2}+||x-d||_{\ell_2}^2=2||d||^2_{\ell_2}$ does not hold.  For instance one can set $x=(0, 1/3, 2/3, 0, \cdots)$ and $d=(1/3, 1/3, 1/3, 0, \cdots)$.  Then $||d||^2_2=1/3$ while $||x-d||_2=||x||_2=||x+d||_2$.  However, this \enquote{fake quadratic structure} from this $\ell^2$ norm and from the curvature of the circle will enable us to come up with a coloring that prevents three-term monochromatic progressions.  Over the integers, the $\ell^2$ norm will end up corresponding to something like an infinite sum of squares of fractional parts $\sum \{na_i\}^2$ which is an infinite-dimensional generalized polynomial and does not simplify to a low-dimensional generalized polynomial like $\{n^2 \alpha\}$ where $\alpha=\sum a_i^2$. We now describe the construction.

\subsection{Building the $\ell^1$ example}

Let $\delta_1$ and $\delta_2$ be small positive numbers so that $\delta_1/\delta_2$ is an integer and $\delta_2$ is a suitably small function of $\delta_1$, so for instance $\delta_2<\delta_1^3$.  For example one can pick $\delta_1=10^{-2}$ and $\delta_2=10^{-10}$.  However, for ease of notation we will write $\delta_1$ and $\delta_2$.

\begin{definition}
	
Let $S=S_{\infty}$ be the subset of sequences $(a_1, a_2, \cdots, )$ from $\ell^1(\N; \R/\Z)$ so that the following hold:

\begin{enumerate}

  \item There is some $i \in \N$ such that 
  $a_i \in (-\delta_1-2\delta_2,\,-\delta_1+2\delta_2) \subset \mathbb{R}/\mathbb{Z}$.
  
  \item For $j \neq i$ we have $a_j \in (-2\delta_2,2\delta_2) \subset \R/\Z$.

  \item $\delta_1-2\delta_2 < \sum_{j \neq i} ||a_j||_{\R/\Z} < \delta_1+2\delta_2.$
  
\end{enumerate}

\end{definition}

In other words, some $a_i$ is in a small interval around $-\delta_1$ in $\R/\Z$ and the other $a_j$'s are all very close to an integer so that their $\R/\Z$ norms add up to about $\delta_1$.

We also define a finitary version $S_m$ of $S$ supported only on the first $m$ coordinates.  We use the notation $\ell^1(\N_{\le m}; \R/\Z)$ for the space $(\R/\Z)^m$ equipped with the usual norm $||(a_1, \cdots, a_m)||=\sum_{i=1}^{m}||a_i||_{\R/\Z}$.  We will need this later for compactness purposes.

\begin{definition}
	
Let $\eta_m=2^{-100m}$ and let $S_m$ be the subset of sequences $(a_1, a_2, \cdots, a_m) \in \ell^1(\N_{\le m}; \R/\Z)$ so that the following hold:

\begin{enumerate}

  \item There is some $i \in \N$ such that 
  $a_i \in (-\delta_1-(2-\eta_m)\delta_2,\,-\delta_1+(2-\eta_m)\delta_2) \subset \mathbb{R}/\mathbb{Z}$.
  
  \item For $j \neq i$ we have $a_j \in (-(2-\eta_m)\delta_2,(2-\eta_m)\delta_2) \subset \R/\Z$.

  \item $\delta_1-(2-\eta_m)\delta_2 < \sum_{j \neq i} ||a_j||_{\R/\Z} < \delta_1+(2-\eta_m)\delta_2.$
  
\end{enumerate}

\end{definition}

Note that this the same definition except with the interval bounds $2\delta_2$ replaced by $(2-\eta_m)\delta_2$ for some small constants $\eta_m=2^{-100m} \to 0$.  We must consider such $S_m$ with positive and decreasing $\eta_m$ for compactness purposes.  We may set $\eta_{\infty}=0$ so that $S_{\infty}=S$.

We first check that $S$ meets every Bohr set in this setting.  This will be important later, when we project $S$ down to $\Z$.  Also for $x \in (\R/\Z)^k$ we define $|f(x)|$ to be the maximum of the $\R/\Z$ norms of the coordinates.

\begin{definition}[Bohr set, $\ell^1(\N_{\le m}; \R/\Z)$]
	Let $m \in \N \cup \infty$.  We say that $B \subset \ell^1(\N_{\le m}; \R/\Z)$ is a Bohr set if there exists $k$ and $\eps$ and a linear function $f:\ell^1(\N_{\le m}; \R/\Z) \to (\R/\Z)^k$ so that $B$ is the set of $b$ so that $|f(b)|<\eps$.
\end{definition}

In other words, we must check that for all linear functions $f:\ell^1(\N; \R/\Z) \to (\R/\Z)^k$, there are points of $S$ that are mapped arbitrarily close to the origin.  What we actually need is a finitary version of this.

\begin{theorem} Let $B$ be a Bohr set with dimension $k$ and width $\eps$.  Then there exists a function $g$ so that if $m \in \N \cup \infty$ and $m>g(k,\eps)$, $S_m \cap B \neq \emptyset$.
	
\end{theorem}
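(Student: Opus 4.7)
The plan is to exhibit an explicit element of $S_m\cap B$ via a pigeonhole argument on the integer coefficients defining $f$. By Pontryagin duality for the torus, any continuous linear map $f:\ell^1(\N_{\le m};\R/\Z)\to(\R/\Z)^k$ is given coordinate-wise by integer linear combinations, so we may write $f(a)_\ell=\sum_j n_{\ell,j}a_j\pmod{1}$ with $n_{\ell,j}\in\Z$, and set $v_j:=(n_{1,j},\dots,n_{k,j})\in\Z^k$ for the $j$th column of the corresponding integer matrix.

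Assume, compatibly with the paper's choice of constants (e.g.\ $\delta_1=10^{-2}$, $\delta_2=10^{-10}$), that $T:=\delta_1/\delta_2$ and $L:=1/\delta_2$ are both positive integers, so that $\delta_2 L=1$ and in particular $\delta_2\cdot L\Z=\Z$. Reducing the first $m$ columns $v_1,\dots,v_m$ modulo $L$ partitions them among the $L^k$ classes of $(\Z/L\Z)^k$, so as soon as $m>L^k\cdot T$ some residue class contains at least $T+1$ columns, say $v_{j_0},v_{j_1},\dots,v_{j_T}$ with $v_{j_s}\equiv v_{j_0}\pmod{L}$ for every $s$.

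Now define $a\in\ell^1(\N_{\le m};\R/\Z)$ by $a_{j_0}:=-\delta_1$, $a_{j_s}:=\delta_2$ for $s=1,\dots,T$, and $a_j:=0$ otherwise. Taking $j_0$ as the special index, the three defining conditions of $S_m$ are all satisfied at the centers of their intervals: $a_{j_0}=-\delta_1$ exactly, every other nonzero coordinate equals $\delta_2$ and lies in $(-(2-\eta_m)\delta_2,(2-\eta_m)\delta_2)$ since $\eta_m<1$, and $\sum_{j\ne j_0}\|a_j\|_{\R/\Z}=T\delta_2=\delta_1$. For the Bohr condition, lifting to $\R$ and computing,
\[
f(a)_\ell=-\delta_1 v_{j_0,\ell}+\delta_2\sum_{s=1}^T v_{j_s,\ell}=\delta_2\sum_{s=1}^T\bigl(v_{j_s,\ell}-v_{j_0,\ell}\bigr)\in\delta_2\cdot L\Z=\Z,
\]
so $f(a)\equiv 0$ in $(\R/\Z)^k$ \emph{exactly}. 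Hence $|f(a)|=0<\eps$ and $a\in S_m\cap B$. This gives the uniform bound $g(k,\eps)=L^k T$, which in fact depends only on $k$.

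The only step requiring real thought is the integer-matrix representation of $f$; past that, the construction is essentially forced, because the arithmetic identity $T\delta_2=\delta_1$ hard-wired into the definition of $S_m$ lines up exactly with the pigeonhole $\bmod L$ needed to annihilate $f(a)$. The mild extra hypothesis that $1/\delta_2$ (as well as $\delta_1/\delta_2$) be an integer is harmless and compatible with the paper's choice of constants, so I foresee no genuine obstacle.
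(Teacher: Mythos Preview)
Your proof is correct and follows the same overall strategy as the paper: pigeonhole among the coordinate directions to find $1+\delta_1/\delta_2$ of them that behave alike under $f$, then take the explicit element with $-\delta_1$ at one of these indices and $\delta_2$ at the others. The difference lies in the pigeonhole step. The paper pigeonholes the \emph{values} $f(\delta_2 e_j)\in(\R/\Z)^k$ into balls of radius $\delta_2\eps$, obtaining only $|f(v)|\le\delta_1\eps<\eps$ via the triangle inequality; as the paper remarks, this uses nothing about $f$ beyond additivity. You instead invoke the integer-matrix description of the dual and pigeonhole the integer columns modulo $L=1/\delta_2$, which forces $f(a)=0$ exactly and hence gives a bound $g(k,\eps)=L^kT$ independent of $\eps$. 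Your route is therefore slightly sharper, at the cost of the extra (harmless, and compatible with the paper's constants) hypothesis $1/\delta_2\in\N$ and the assumption that $f$ is a genuine continuous character; the paper's argument goes through for an arbitrary additive map.
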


\begin{proof}
	
Let $\delta_2 e_j$ be the vector that is $\delta_2$ in the $j$th coordinate and $0$ in all other coordinates.  For all $\eps$, if $m$ is sufficiently large as a function of $k$ and $\eps$ we can find a set $T$ of $1+\delta_1/\delta_2$ natural numbers so that the following property holds.  Call one of these numbers $i$.  Then for any of the other numbers $j$, the vector $f_j:=f(\delta_2 e_j)$ is within $\delta_2 \eps$ of the vector $f_i:=f(\delta_2 e_i)$.  In other words $|f_j-f_i| \le \delta_2 \eps$, where the norm is taken in $(\R/\Z)^k$.  

The vector $v$ with coordinate $-\delta_1$ in the $i$th coordinate, $\delta_2$ in the $j$th coordinate for $T \ni j \neq i$, and $0$ in all other coordinates is in $S_m$ by definition.  Since $v$ is of the form $\sum_{T \ni j \neq i} (\delta_2 e_j - \delta_2 e_i)$, by the triangle inequality the norm of $f(v)$ in $(\R/\Z)^k$ is bounded by $\sum_{T \ni j \neq i} ||f_j-f_i|| \le (\delta_1/\delta_2)\delta_2 \eps=\delta_1 \eps<\eps$.  The same argument shows that $S_m$ meets a Bohr set as long as $m$ is a large enough function of the dimension $k$ and the inverse-width $1/\eps$ of the Bohr set. \end{proof}

  In fact we only used here that the function is additive, since $\delta_1/\delta_2 \in \N$.  The astute reader may notice that the linear functions $f: \ell^1(\N; \R/\Z) \to \R/\Z$ comprise the dual group $\ell^{\infty}(\N; \R/\Z)$, are given by a sequence of integers $(b_1, b_2, \cdots)$ with some $C$ so that $|b_i| \le C$ for all $i$, and send $(a_1, a_2, \cdots)$ to $\sum_{i \in \N} a_ib_i$.  In particular, a function to $(\R/\Z)^k$ will consist of $k$ such functions, one for each coordinate.
  
  \subsection{The coloring}

We now describe the coloring on $\ell^1(\N_{\le m}; \R/\Z)$ for $m \in \N \cup \{\infty\}$.  Let $e(x)=e^{2\pi ix}$ be the exponential function from $\R/\Z$ to the unit circle in $\C$.  Note by the mean value theorem that $|e(x)-1| \le 2\pi ||x||_{\R/\Z}$ where the norm on the left-hand side is taken in the complex numbers.  

We will color the sequence $x=(a_1, \cdots, a_m) \in \ell^1(\N_{\le m}; \R/\Z)$  by the value of $f(x)=\sum_{i=1}^{m} \left( e(a_i)-1 \right)$ or in $\C$, taken mod the Gaussian integers $\Z[i]$ up to a small error.  Since $e(x)-1$ is bounded by a constant multiple of the $\R/\Z$ norm, for any $x \in \ell^1(\N; \R/\Z)$, $f(x)$ will be well-defined even when $m=\infty$.  

\begin{definition}~\label{def:color} Partition $\C/\Z[i] \cong (\R/\Z)^2$ into $O(1/\delta^2_2)$ sets of diameter at most $\delta_2$.  Color $(a_1, \cdots, a_m) \in \ell^1(\N_{\le m}; \R/\Z)$ by the set that contains $\sum_{i=1}^{m} (e(a_i)-1)$ taken modulo $\Z[i]$.  
\end{definition}

We now show that our coloring lacks three-term monochromatic progressions with common difference in $S$.

\begin{theorem}
	Let $x,s \in \ell^1(\N_{\le m}; \R/\Z)$ with $s \in S_m$.  Then if $\ell^1(\N_{\le m}; \R/\Z)$ is colored as in~\Cref{def:color}, then $x$, $x+s$, and $x+2s$ will not all receive the same color.\end{theorem}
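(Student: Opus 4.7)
My plan is to reduce the monochromatic-3AP condition to a bound on a "discrete second derivative" of $f$, and then show this second derivative, which is a sum of per-coordinate contributions, is dominated by the single large coordinate $i$ of $s$, with the remaining coordinates too small to cancel it.

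First I would use the telescoping identity in each coordinate: for a single $a,s\in\R/\Z$,
\[
\bigl(e(a+2s)-1\bigr)-2\bigl(e(a+s)-1\bigr)+\bigl(e(a)-1\bigr)=e(a)\bigl(e(s)-1\bigr)^2.
\]
Summing over $k\le m$, writing $\Delta^2 f:=f(x+2s)-2f(x+s)+f(x)$, gives the crucial formula
\[
\Delta^2 f \;=\; \sum_{k=1}^{m} e(a_k)\bigl(e(s_k)-1\bigr)^2.
\]
Using $|e(t)-1|=2|\sin(\pi t)|$, the magnitude of the $k$-th summand equals $4\sin^2(\pi s_k)$, which is comparable to $4\pi^2\|s_k\|_{\R/\Z}^2$ for small $\|s_k\|$. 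Thus $|\Delta^2 f|\le 4\pi^2\sum_k\|s_k\|_{\R/\Z}^2$, in particular it is far less than $1/2$, so reduction modulo $\Z[i]$ does not change magnitudes and the distance from $\Delta^2 f$ to the nearest Gaussian integer is just $|\Delta^2 f|$ itself.

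Next I would split the sum into the distinguished coordinate $i$ (with $\|s_i\|_{\R/\Z}$ in a window of width $O(\delta_2)$ around $\delta_1$) and the rest. The distinguished summand has magnitude $4\sin^2(\pi s_i)\approx 4\pi^2\delta_1^2$, bounded below by some constant multiple of $\delta_1^2$ (using $\delta_2\ll\delta_1$). The other summands satisfy $\|s_j\|_{\R/\Z}\le 2\delta_2$ and $\sum_{j\neq i}\|s_j\|_{\R/\Z}<\delta_1+2\delta_2$, so
\[
\sum_{j\neq i}\|s_j\|_{\R/\Z}^2 \;\le\; 2\delta_2\sum_{j\neq i}\|s_j\|_{\R/\Z}\;\le\; 2\delta_2(\delta_1+2\delta_2)\;=\;O(\delta_1\delta_2),
\]
whose contribution to $\Delta^2 f$ is $O(\delta_1\delta_2)$. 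Since $\delta_2<\delta_1^3$, this is negligible compared to the main term, and by the triangle inequality $|\Delta^2 f|\ge c\,\delta_1^2$ for an absolute constant $c>0$.

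Finally, if $x,x+s,x+2s$ were monochromatic, their images $f(x),f(x+s),f(x+2s)\in\C/\Z[i]$ would lie in a common set of diameter at most $\delta_2$, so one could choose representatives in $\C$ at pairwise distance $\le\delta_2$ after adding appropriate Gaussian integers. Then $\Delta^2 f$ modulo $\Z[i]$ would be represented by a complex number of magnitude at most $3\delta_2$. But since $|\Delta^2 f|<1/2$ as a complex number, the representative of smallest magnitude modulo $\Z[i]$ is $\Delta^2 f$ itself, forcing $|\Delta^2 f|\le 3\delta_2\ll\delta_1^2$, contradicting the lower bound. The main obstacle is the trade-off in step two: showing that the "non-Hilbertness" of $\ell^1$—embodied in the one large coordinate with $\|s_i\|\gg\|s_j\|$ despite comparable $\ell^1$ mass—makes $\sum_k\|s_k\|^2$ bounded below by $\delta_1^2$ rather than by the square of the $\ell^1$ norm, which is what the parallelogram law would predict. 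The choice $\delta_2<\delta_1^3$ is made precisely to absorb the cross terms while retaining this gap.
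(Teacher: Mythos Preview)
Your proposal is correct and follows essentially the same route as the paper: split the second difference $f(x)-2f(x+s)+f(x+2s)$ into the contribution from the distinguished coordinate $i$ (of size $\Theta(\delta_1^2)$) and the rest (bounded by $(\max_{j\neq i}\|s_j\|)(\sum_{j\neq i}\|s_j\|)=O(\delta_1\delta_2)$), conclude the second difference has magnitude between $2\delta_2$ and $1/100$, and contradict monochromaticity via the triangle inequality in $\C/\Z[i]$. Your explicit factorization $e(a+2s)-2e(a+s)+e(a)=e(a)(e(s)-1)^2$ makes the per-coordinate magnitudes $4\sin^2(\pi s_k)$ transparent, but this is a presentational refinement rather than a different argument; also note that your $3\delta_2$ can be sharpened to $2\delta_2$ as in the paper by writing $\Delta^2 f=(z_2-z_1)-(z_1-z_0)$.
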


Before proving this, we recall a basic fact from geometry and trigonometry.  If we consider three equally spaced points on the unit circle, if they cut out an arc which is about a $\delta$ proportion of the circle, then the middle point will have distance about $\delta^2$ from the midpoint of the arc, by the standard Taylor approximation $1-\cos(\delta)=\delta^2/2+O(\delta^3)$.  This distance is called the \enquote{sagitta}.  So the contribution to $f(x)-2f(x+s)+f(x+2s)$ from the big $i$ term so that $a_i \approx \delta_1$ will be about $\delta_1^2$, whereas the other contributions from the $j$ terms will in total at most $\frac{\delta_1}{\delta_2}\delta^2_2 \approx \delta_1 \delta_2$ which is smaller.

\begin{figure}[t]
\centering
\begin{tikzpicture}[scale=3.4, line cap=round, line join=round]

  \def\tht{45}   
  \def\phi{60}   

  \draw[thick] (0,0) circle (1);
  \fill (0,0) circle (0.6pt);

  \coordinate (P) at ({cos(\phi+\tht/2)},{sin(\phi+\tht/2)});
  \coordinate (Q) at ({cos(\phi-\tht/2)},{sin(\phi-\tht/2)});

  \coordinate (A) at ({cos(\phi)},{sin(\phi)});
  \coordinate (M) at ({cos(\tht/2)*cos(\phi)},{cos(\tht/2)*sin(\phi)});

  \draw[thin] (0,0)--(P);
  \draw[thin] (0,0)--(Q);

  \draw[->, thin] (\phi-\tht/2:0.20) arc (\phi-\tht/2:\phi+\tht/2:0.20);
  \node[font=\scriptsize, fill=white, inner sep=1pt]
        at (\phi:0.28) {$\delta$};

  \draw[very thick] (\phi-\tht/2:1) arc (\phi-\tht/2:\phi+\tht/2:1);
  \draw[thin] (P)--(Q);

  \draw[dashed] (M)--(A);

  \coordinate (G) at (\phi+22:1.18);
  \node[font=\scriptsize, fill=white, inner sep=1pt] at (G)
        {gap $\asymp \delta^2$};
  \draw[->, thin] (G) -- ($(M)!0.55!(A)$);

  \fill (P) circle (0.5pt);
  \fill (Q) circle (0.5pt);
  \fill (A) circle (0.6pt);
  \fill (M) circle (0.6pt);

\end{tikzpicture}
\caption{Curvature on the unit circle: an arc of proportion \(\delta\) (central angle \(2\pi\delta\)) has sagitta
\(1-\cos(\pi\delta)\asymp \delta^2\) between its arc-midpoint and chord-midpoint.}
\label{fig:curvature-gap}
\end{figure}
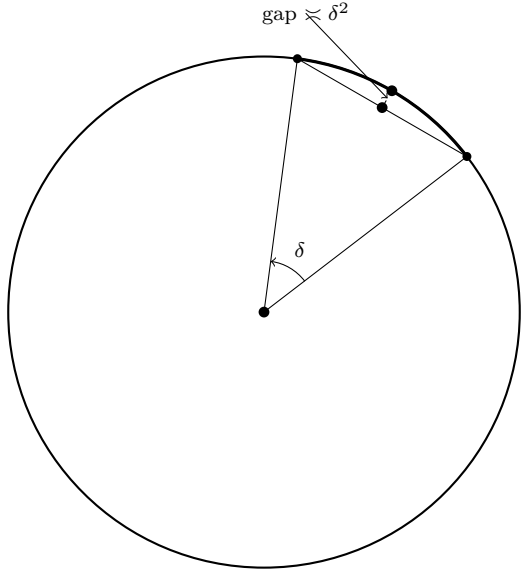

\begin{proof}
	
Let $s=(s_1, \cdots, s_n) \in S \subset \ell^1(\N; \R/\Z)$ and pick $\delta_1$ to be a sufficiently small constant so that $1/\delta_1$ is an integer, and let $\delta_2=\delta_1^5$ be an even smaller constant.  In a slight abuse notation, we will use $\Theta$ and $O$ for quantities that stay constant as $\delta_1 \to 0$, but we will end up making $\delta_1$ a small enough constant itself.  For any $x=(x_1, \cdots, x_n, \cdots) \in \ell^1(\N; \R/\Z)$, we have that: 

$$f(x)-2f(x+s)+f(x+2s)=e(x_i)-2e(x_i+s_i)+e(x_i+2s_i)+\sum_{j \neq i}\left( e(x_j)-2e(x_j+s_j)+e(x_j+2s_j) \right).$$  

The first term $e(x_i)-2e(x_i+s_i)+e(x_i+2s_i)$ has $\ell^2$ norm $\Theta(\delta_1^2)$ in $\C$ while the other terms have a total $\ell^2$ norm in $\C$ of $$O(\sum_{j \neq i} ||s_j||_{\R/\Z}^2)=O\left((\max_{j \neq i} ||s_j||_{\R/\Z}) (\sum_{j \neq i} ||s_j||_{\R/\Z})\right)=O(
\delta_1 \delta_2)=O(\delta_1^6).$$  So if we choose $\delta_1$ to be a small enough constant, $f(x)-2f(x+s)+f(x+2s)$ has $\ell^2$ norm $\Theta(\delta^2_1)>2 \delta_2$ in $\C$.  This is less than $1/100$, and is far greater than $2 \delta_2$ as well.  This means that $f(x), f(x+s), f(x+2s)$ cannot all be within distance $\delta_2$ of each other in $\C / \Z[i]$, as if they were then $f(x)-2f(x+s)+f(x+2s)=(f(x)-f(x+s))-(f(x+s)-f(x+2s))$ would be within $2\delta_2$ of a Gaussian integer by the triangle inequality.  Hence $x, x+s, x+2s$ cannot all receive the same color under our coloring, and there is no monochromatic $3$-term arithmetic progression with difference $s \in S$. \end{proof}

\subsection{Projecting to $\Z$}

We now need to project the example $S=S_{\infty}$ from the previous subsection onto a set $S=S_{\N}$ in $\Z$.  Note that in order to avoid a notational clash for $S$, we refer to the sets from the previous section as $S_m$ for $m \in \N \cup \{\infty\}$ and we refer to the image of the projection of $S_{\infty}$ as $S_{\N}$.

We will pick a sufficiently quickly shrinking sequence of rationally independent irrational numbers $(\alpha_1, \alpha_2, \cdots)$ and consider the projection map $P$ sending $n \to (\{\alpha_1 n\}, \{\alpha_2 n\}, \cdots) \in \ell^1(\N; \R/\Z)$.  We will pick our set of integers $S=S_{\N}$ to be the pre-image of the set $S_{\infty} \subset \ell^1(\N; \R/\Z)$ from the last section under this map.  Since $P$ is a homomorphism, it is immediate to verify that there is a finite coloring of $\Z$ so that there are no monochromatic $3$-APs in $\Z$ with common difference in $P(S)=S_{\N}$.  Simply color $n \in \Z$ by the color of $P(n) \in \ell^1(\N; \R/\Z)$.  We also mention here that since $\alpha_i$ shrink sufficiently quickly, it is immediate that the image will be dense in $\ell^1(\N; \R/\Z)$ with the usual $\ell^1$ metric.  

It remains to also check that $S_{\N}$ intersects any finite number of Bohr sets and nil-Bohr sets.  We dealt with the Bohr sets in the last section, and to show that higher-order nil-Bohr sets are not an issue we will use standard results about the independence of higher degree generalized polynomials up to easily handled obstructions.  Since $S$ is constructed only from linear Bohr sets ${n \alpha}$, it is going to generally be independent of ${n^2 \alpha}$ and intersect the set of $n$ so that $||n^2 \alpha||_{\R/\Z}<\eps$.  Of course ${n^2 \alpha}$ and ${(n^2+n)\alpha}$ will not be linearly independent of ${n \alpha}$, and so we may get constantly many more linear Bohr obstructions that can arise in this way.  

Fortunately generalized polynomials can be generated by \enquote{basic generalized polynomials} that do equi-distribute unless there is a linear obstruction, so the usual Bohr set obstacles are all we need to check.  In particular by the main result of \cite{leibman} we may write our generalized polynomials as piecewise polynomial functions of these \enquote{basic generalized polynomials} and so any nil-Bohr set contains a nil-Bohr set generated by these basic generalized polynomials.  We remark also that the \enquote{basic generalized polynomials} of \cite{leibman} are slightly different than the \enquote{special generalized polynomials} of \cite{hsy}, and that Leibman uses the abbreviation \enquote{gen-polynomials} for generalized polynomials.

We now proceed to a rigorous proof. Let $\eps_i \to 0$ be a sequence of positive numbers that goes to $0$ sufficiently quickly, and pick $\alpha_1, \alpha_2, \cdots$ going to $0$ sufficiently quickly depending on $\eps_i$.  Let $S_{\N}$ be the inverse image of $S_{\infty}$ under the projection $P$.  Let $B$ be a nil-Bohr set.  By Theorem 0.2 of \cite{leibman}, $B$ contains a nil-Bohr set generated by basic generalized polynomials $B_1, \cdots, B_k$ and by Theorem 0.1 of \cite{leibman} they jointly equi-distribute. Pick $m=m(B)$ sufficiently large depending on $B$, and consider the map $(P_m, B)$ sending $n$ to $(\{n\alpha_1\}, \cdots, \cdots \{n\alpha_m\}, B_1(n), \cdots, B_k(n))$ where $k=k(B)$. So we define $P_m(n)=(\{n\alpha_1\}, \cdots, \cdots \{n\alpha_m\})$ and $B(n)=(B_1(n), \cdots, B_k(n))$.  

A \emph{subtorus} of $(\R/\Z)^{m+k}$ is defined to be the set of points where some set of linear forms with integer coefficients vanish, and the codimension is the number of independent linear forms needed to generate this vanishing set.  For our purposes we need a slight modification of Leibman's result.

\begin{theorem}[Leibman]~\label{thm:gtlf} Let $B=\{B_1, \cdots, B_k\}$ be a family of equi-distributed basic generalized polynomials and let $\eps>0$.  If $\alpha_1, \cdots, \alpha_m$ are rationally independent irrational numbers, then the image of the map $n \to (\{n\alpha_1\}, \cdots, \{n\alpha_m\}, B_1(n), \cdots, B_k(n))$ which we denote as $(P_m(n),B(n))$ is $\eps$-dense in some $O_k(1)$ codimension sub-torus of $(\R/\Z)^{m+k}$. \end{theorem}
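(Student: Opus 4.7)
The plan is to apply Leibman's joint equi-distribution theorem to the enlarged family
\[
\F \;=\; \{\,n\alpha_1,\dots,n\alpha_m,\; B_1,\dots,B_k\,\},
\]
whose members are all basic generalized polynomials (each $n\alpha_i$ is trivially basic, of degree one). Theorem 0.1 of \cite{leibman} applied to $\F$ produces a subtorus $T \subseteq (\R/\Z)^{m+k}$ in which the joint orbit $n\mapsto(P_m(n),B(n))$ equi-distributes, and $\eps$-density in $T$ is immediate. The whole content of the modification is therefore the codimension bound: one must show that $\mathrm{codim}(T)$ is controlled by a function of $k$ alone, independently of $m$.

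The subtorus $T$ is cut out by integer linear relations
\[
\sum_{i=1}^m c_i x_i \;+\; \sum_{j=1}^k d_j y_j \;\equiv\; 0 \pmod{1},
\]
which, restricted to the orbit, become identities $\sum c_i n\alpha_i + \sum d_j B_j(n) \equiv 0 \pmod{1}$ valid for every $n$. A relation with all $d_j=0$ would force $\sum c_i \alpha_i = 0$, violating rational independence of the $\alpha_i$; a relation with all $c_i = 0$ would force the $B_j$ into a proper subtorus of $(\R/\Z)^k$, contradicting their standing equi-distribution. Every defining relation of $T$ is therefore a genuine coupling, forcing $\sum d_j B_j(n) \pmod 1$ to coincide with the linear function $-n\sum c_i\alpha_i$.

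The main obstacle, and essentially the only place where real work beyond Leibman's theorem is needed, is to verify that the sublattice of $(d_j)\in\Z^k$ for which $\sum d_j B_j(n)\pmod{1}$ admits a genuinely linear-in-$n$ representative has rank at most $O_k(1)$, and that on this sublattice the extracted linear frequency is an integer-linear function of $(d_j)$ taking values in an $O_k(1)$-dimensional real subspace. This is the structural content hidden in the basic generalized polynomial calculus of \cite{leibman}: each $B_j$ has a canonical decomposition whose degree-$1$ modulo-$1$ piece is a fixed real multiple of $n$, and $\sum d_j B_j$ is linear mod $1$ exactly when the higher-degree bracket contributions cancel, a condition $\Z$-linear in $(d_j)$. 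Granting this, the lattice of coupling pairs $(c,d)\in\Z^{m+k}$ has rank $O_k(1)$, giving the required bound on $\mathrm{codim}(T)$ and completing the proof.
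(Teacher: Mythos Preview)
Your direct approach---apply Leibman's Theorem 0.1 once to the enlarged family $\F$ and then analyse the lattice of integer relations cutting out the resulting subtorus $T$---is correct, and it is organised differently from the paper's argument. The paper instead proceeds inductively: starting from the hypothesis that $(B_1,\dots,B_k)$ already equidistributes on $(\R/\Z)^k$, it reveals the $\alpha_i$ one at a time, observing that each new $\{n\alpha_i\}$ either jointly equidistributes with what is already there (if $\alpha_i$ is rationally independent from the linear frequencies $\{n\beta\}$ already present) or is a rational combination of those $\beta$'s, adding one to the codimension. Since the $\alpha_i$ are rationally independent among themselves, the second alternative can occur at most as many times as there are linear frequencies coming from $B$, which is $O_k(1)$.

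Your route actually yields the sharper bound $\mathrm{codim}(T)\le k$ with less work than you suggest. You correctly note that a relation with all $d_j=0$ forces all $c_i=0$ by rational independence of the $\alpha_i$. This alone says the projection $(c,d)\mapsto d$ is \emph{injective} on the relation lattice, so that lattice embeds in $\Z^k$ and has rank at most $k$. That is the entire codimension bound. The ``main obstacle'' you describe---bounding the rank of the sublattice of $d\in\Z^k$ for which $\sum d_j B_j(n)$ is linear mod $1$, and controlling the dimension of the space of extracted frequencies---is not an obstacle at all: any sublattice of $\Z^k$ has rank at most $k$, and the frequency map, being additive on a rank-$\le k$ domain, automatically has image in a $\le k$-dimensional $\Q$-subspace. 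No structural input from the bracket-polynomial calculus is needed beyond what you already invoked to obtain $T$.
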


\begin{proof}
	
By Theorem 0.1 of \cite{leibman}, the last $k$ coordinates will equidistribute over the torus $(\R/\Z)^k$.  Now reveal the $\alpha_i$ one by one.  At each step, the new $\alpha_i$ will either be rationally independent from all of our existing gen-polynomials $\{n \beta\}$ (i.e. the already revealed $\{n \alpha_i\}$ and the $\{n \beta\}$ from $B$), in which case $\{n\alpha_i\}$ will jointly equidistribute, or it will be a rational linear combination of the $\beta$.  Because the $\alpha_i$ are rationally independent, the latter case will occur only $O_k(1)$ times.  \end{proof}

In~\Cref{thm:gtlf}, it is equivalent to restrict $n$ to be bounded by some function $F$ of $(\alpha_1, \cdots, \alpha_m)$, $\eps$, and $B$.  For our purposes we will need an infinitary corollary of~\Cref{thm:gtlf}.  We first record a slightly refined version.  

First there exists a \enquote{complexity} function $c(B,\eps)$ on nil-Bohr neighborhoods with some useful properties.  We say that $(B,\eps)$ has complexity at most $C$ if it can be expressed as the pointwise limit of nil-Bohr neighborhoods with dimension at most $C$, degree at most $C$, and width at least $1/C$.  So $C$ is a bound on the parameters of the nil-Bohr set.  In order to compute the exact complexity of $(B,\eps)$, we take the infimum of these upper bounds.  In particular, this complexity function goes to infinity with the inverse width $1/\eps$, the degree $d$, and the dimension $k$, and a pointwise limit of $(B,\eps)$ with complexity at most $C$ also has complexity at most $C$.

\begin{theorem}[Leibman, refined]~\label{thm:refined}There exists an explicit $F'$ so that for any nil-Bohr set $(B, \eps)$, it holds that $F((\alpha_1, \cdots, \alpha_m), \eps, B) \le F'( (\alpha_1, \cdots, \alpha_m), c(B,\eps))$.  \end{theorem}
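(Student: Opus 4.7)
The plan is to prove the refined bound in two stages. First, I control $F$ uniformly on the class of nil-Bohr data with explicitly bounded structural parameters, using compactness of the parameter space after normalizing coefficients modulo $\Z$. Second, I transfer this uniform bound to a general $(B,\eps)$ via the pointwise-limit definition of complexity, which was introduced precisely so that the class is closed under such limits.

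Fix $C$ and let $\mathcal{N}_C$ denote the class of nil-Bohr pairs $(B',\eps')$ whose defining family of basic generalized polynomials has dimension $\le C$ and degree $\le C$, and whose width satisfies $\eps'\ge 1/C$. Since shifting any real coefficient of a basic generalized polynomial by an integer leaves the polynomial unchanged modulo $\Z$, I can normalize the coefficients to lie in $[0,1)$, so for each of the finitely many admissible combinatorial shapes the parameter space embeds into a compact product of tori with the width interval $[1/C,1]$. On this compact space I claim the function $(B',\eps')\mapsto F((\alpha_1,\ldots,\alpha_m),\eps',B')$ from~\Cref{thm:gtlf} is locally bounded: at any $(B_0,\eps_0)\in\mathcal{N}_C$ with $N_0:=F((\alpha_1,\ldots,\alpha_m),\eps_0,B_0)<\infty$, the finite set $\{(P_m(n),B_0(n)): n\le N_0\}$ is $\eps_0$-dense in the associated sub-torus by definition, and under a small coefficient perturbation the maps $B'(n)$ remain uniformly close to $B_0(n)$ on the bounded range $n\le N_0$, so the same $N_0$ continues to witness density up to an error absorbed by mildly slackening the width within the compact window $[1/C,1]$. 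A finite subcover of $\mathcal{N}_C$ then yields a uniform bound $F''((\alpha_1,\ldots,\alpha_m),C)$.

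For a general $(B,\eps)$ with $c(B,\eps)\le C$, write $(B,\eps)$ as the pointwise limit of some $(B^{(k)},\eps^{(k)})\in\mathcal{N}_C$. The previous step supplies $n_k\in[1,F''((\alpha_1,\ldots,\alpha_m),C)]$ lying in $(B^{(k)},\eps^{(k)})$ for each $k$; since this range is a finite set of integers, pigeonhole produces a fixed $n$ that lies in $(B^{(k)},\eps^{(k)})$ for infinitely many $k$, and pointwise convergence of indicator functions then places $n$ in $(B,\eps)$. Setting $F'((\alpha_1,\ldots,\alpha_m),C):=F''((\alpha_1,\ldots,\alpha_m),C)$ completes the proof.

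The main obstacle is the local boundedness of $F$ on $\mathcal{N}_C$: the sub-torus identified in~\Cref{thm:gtlf} can drop dimension whenever coefficients acquire rational dependences, making $F$ \emph{a priori} discontinuous in the parameters. I would address this by noting that along any convergent sequence in $\mathcal{N}_C$ the sub-torus of the limit can only gain codimension compared to nearby members, so an $\eps_0$-dense covering of the limit's sub-torus lies within $\eps_0+o(1)$ of the larger nearby sub-tori; a slight tightening of the width within $[1/C,1]$ restores the required $\eps'$-density. This gives the upper semicontinuity that, combined with compactness of $\mathcal{N}_C$, yields the uniform bound required above.
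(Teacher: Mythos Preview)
Your argument and the paper's are both compactness arguments, but the paper runs it as a three-line contradiction: fix $(\alpha_1,\ldots,\alpha_m)$, assume $F$ is unbounded over $(B,\eps)$ with $c(B,\eps)\le C$, pass to a pointwise-convergent subsequence of nil-Bohr \emph{sets} whose limit still has complexity $\le C$ (this closure is built into the definition of $c$), and declare $F=\infty$ at the limit, contradicting~\Cref{thm:gtlf}. Your two-stage route is a direct unpacking of the same idea: Stage~1 seeks a uniform bound on the explicit class $\mathcal{N}_C$ via compactness of the coefficient torus, and Stage~2 transfers that bound to arbitrary complexity-$\le C$ data via the pointwise-limit definition and a pigeonhole on a finite range of $n$. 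Your Stage~2 is precisely the paper's move, just run forward rather than by contradiction.

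Where you go beyond the paper is in isolating the obstacle that the Leibman sub-torus can jump codimension under coefficient perturbation. Your proposed fix, however, has the semicontinuity oriented the wrong way. At a degenerate $(B_0,\eps_0)$ where a rational dependence shrinks the sub-torus, nearby generic $B'$ live on a strictly \emph{larger} sub-torus; a finite set that is $\eps_0$-dense in the small limit torus is concentrated near that torus and is not $\eps'$-dense in the larger nearby one, so $N_0$ fails to witness density for $B'$ and local boundedness of $F$ on $\mathcal{N}_C$ breaks exactly at those points. The paper sidesteps this by never touching the coefficient space: it takes limits of the nil-Bohr sets as subsets of $\N$ and leans on the fact that the complexity class was defined to be closed under such limits. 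Since your Stage~2 already contains that mechanism, leading with it and dropping the parameter-space Stage~1 would align your proof with the paper's and avoid the sub-torus difficulty altogether.
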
 

\begin{proof} We prove this via compactness.  Assume there exists $(\alpha_1, \cdots, \alpha_m)$ and a sequence of $(B,\eps)$ with $c(B,\eps) \le C$ so that $F((\alpha_1, \cdots, \alpha_m), \eps, B)$ tends to infinity.  By compactness, a subsequence of the $(B,\eps)$ has some nil-Bohr set $(B_\ell, \eps_{\ell})$ also of complexity bounded by $C$ as a pointwise limit.  But then $F(\alpha_1, \cdots, \alpha_m, B_{\ell}, \eps_{\ell})=\infty$, a contradiction. \end{proof}

We need an infinitary corollary of this refinement for our purposes.

\begin{theorem}[Leibman, infinitary corollary]~\label{thm:gtli} There exists a sufficiently quickly shrinking sequence $\alpha_i$ so that the following holds.  Let $B_1, \cdots, B_k$ be equi-distributed basic generalized polynomials.  If $m=m(\eps, B_1, \cdots, B_k)$ is sufficiently large, then the image of the map $n \mapsto (\{n\alpha_1\}, \cdots, \{n\alpha_m\}, B_1(n), \cdots, B_k(n))=(P_m(n),B(n))$ for $n$ ranging and sufficiently smaller than $1/\alpha_{m+1}$ is $\eps$-dense in some $O_k(1)$ codimension sub-torus of $(\R/\Z)^{m+k}$.	
\end{theorem}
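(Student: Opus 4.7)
The plan is to prove \Cref{thm:gtli} by a diagonal construction of the sequence $(\alpha_i)$ built directly on top of \Cref{thm:refined}. The key enabler is that the bound $F'$ in \Cref{thm:refined} depends on the nil-Bohr neighborhood $(B,\eps)$ only through its complexity $c(B,\eps)$ and not through its identity. This means that, when I am choosing $\alpha_{m+1}$ at the $m$-th stage of the induction, I only have to beat $F'$ against all nil-Bohr sets of complexity at most $m$; higher-complexity nil-Bohr sets will be handled at later stages of the construction.

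Concretely, I would construct $\alpha_i$ inductively. Having fixed positive irrationals $\alpha_1 > \alpha_2 > \dots > \alpha_m$, rationally independent over $\Q$, I choose $\alpha_{m+1}$ to be any irrational in $(0,\alpha_m/2)$ that is rationally independent of $\alpha_1,\dots,\alpha_m$ and small enough that
\[
\frac{1}{\alpha_{m+1}} \;\ge\; 10\,F'\bigl((\alpha_1,\dots,\alpha_m),\,m\bigr).
\]
The right-hand side is finite for each fixed $(\alpha_1,\dots,\alpha_m)$ and $m$ by \Cref{thm:refined}, and the set of irrationals rationally independent of a finite collection is dense in any open interval, so such an $\alpha_{m+1}$ always exists. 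The constant $10$ is only a placeholder for whatever factor makes ``sufficiently smaller than $1/\alpha_{m+1}$'' operational in the conclusion.

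To verify the statement, fix an equi-distributed family $B_1,\dots,B_k$ of basic generalized polynomials together with a tolerance $\eps > 0$, and let $C := c(B,\eps)$ be the complexity of the associated nil-Bohr neighborhood. Define $m(\eps,B_1,\dots,B_k) := C$; then for any $m \ge m(\eps,B_1,\dots,B_k)$ we have $c(B,\eps) \le m$. Since the class of nil-Bohr sets of complexity $\le m'$ is increasing in $m'$, the function $F'$ is monotone in its second argument, and hence
\[
F'\bigl((\alpha_1,\dots,\alpha_m),\,C\bigr) \;\le\; F'\bigl((\alpha_1,\dots,\alpha_m),\,m\bigr) \;\le\; \tfrac{1}{10}\cdot\tfrac{1}{\alpha_{m+1}}.
\]
\Cref{thm:refined} now gives that the image of $n \mapsto (P_m(n), B(n))$ restricted to $n \le F'((\alpha_1,\dots,\alpha_m), C)$ is $\eps$-dense in a codimension-$O_k(1)$ subtorus of $(\R/\Z)^{m+k}$, and in particular this density persists for $n$ ranging over any interval of the form $[1,\,c/\alpha_{m+1}]$ with $c \le 1/10$. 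The substantive difficulty was to consolidate the infinite family of possible nil-Bohr sets into a single finite complexity parameter, and that was already settled by the compactness argument in \Cref{thm:refined}; once that is in place, the diagonal construction here is essentially forced.
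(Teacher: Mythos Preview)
Your argument is correct and follows essentially the same route as the paper's proof: construct the $\alpha_i$ inductively so that $1/\alpha_{m+1}$ dominates $F'((\alpha_1,\dots,\alpha_m),m)$, then for a given $(B,\eps)$ wait until $m \ge c(B,\eps)$ and invoke \Cref{thm:refined}. The only inaccuracy is in your final sentence: the $\eps$-density is guaranteed on $[1,N]$ for any $N$ with $F \le N \le \tfrac{1}{10}\cdot\tfrac{1}{\alpha_{m+1}}$, not for \emph{every} $c \le 1/10$ (very small $c$ could give an interval shorter than $[1,F]$); but the existence of such an $N$ is all the theorem requires, so this is a phrasing slip rather than a gap.
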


\begin{proof} By~\Cref{thm:refined} we may pick a sequence of $\alpha_i$ that goes to zero quickly enough so that for any $B$ and $\eps$, for sufficiently large $m$ we have $F(\alpha_1, \cdots \alpha_m, B, \eps)$ is sufficiently smaller than $1/\alpha_{m+1}$.  For instance if we guarantee that $1/\alpha_{m+1}$ is much larger than $F'(\alpha_1, \cdots \alpha_m, m) \ge F(\alpha_1, \cdots, \alpha_m, B, \eps)$ for $c(B,\eps) \le m$, then for any fixed $(B,\eps)$ we will have that $F(\alpha_1, \cdots \alpha_m, B, \eps)$ will be sufficiently smaller than $1/\alpha_{m+1}$ for $m$ which is a large enough function of $c=c(B,\eps)$. \end{proof}

Note that in the above theorem, if we remove the size bound, and do not require that $n$ must be small compared to $1/\alpha_{k+1}$, the image will be dense.

\begin{theorem}
	If we take an appropriate sequence of $\alpha_i$ satisfying the conditions of~\Cref{thm:gtli}, then for any nil-Bohr set $(B_{\ell}, \eps_{\ell})$, $P(S) \cap B \neq 
	\emptyset$.
\end{theorem}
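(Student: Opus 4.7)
The plan is to show $S_{\N} \cap B_{\ell} \neq \emptyset$. First, by Theorem 0.2 of \cite{leibman}, I may replace $B_{\ell}$ by a nil-Bohr set of the form $\{n : \|B_j(n)\|_{\R/\Z} < \eps \text{ for all } 1 \le j \le k\}$ generated by jointly equi-distributed basic generalized polynomials $B_1, \ldots, B_k$ and some width $\eps > 0$. It therefore suffices to exhibit $n \in S_{\N}$ with $\|B_j(n)\|_{\R/\Z} < \eps$ for every $j$.

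Next, I would apply~\Cref{thm:gtli} with some small parameter $\eps'' > 0$ (to be fixed so that $\eps''$-perturbations do not take us out of $S_m$ or out of the width-$\eps$ nil-Bohr neighborhood) and $m$ sufficiently large (as a function of $\eps''$ and the $B_j$), so that the image of the map $n \mapsto (P_m(n), B(n)) = (\{n\alpha_1\}, \ldots, \{n\alpha_m\}, B_1(n), \ldots, B_k(n))$, as $n$ ranges over $n \le N$ for some $N$ sufficiently smaller than $1/\alpha_{m+1}$, is $\eps''$-dense in some subtorus $T \subseteq (\R/\Z)^{m+k}$ of codimension $C = O_k(1)$. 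The goal is then to locate a point $p^* = (p_a, p_b) \in T$ with $p_a$ satisfying a slightly strengthened form of the $S_m$ conditions and $|p_b| < \eps/2$; $\eps''$-density then yields some $n \le N$ with $P_m(n) \in S_m$ and $\|B_j(n)\|_{\R/\Z} < \eps$ simultaneously.

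To construct $p^*$ I would exploit the structure of $T$. Since the $B_j$ jointly equi-distribute on $(\R/\Z)^k$ by Theorem 0.1 of \cite{leibman}, the projection $T \to (\R/\Z)^k$ is surjective, and consequently the fiber $T_0 := T \cap ((\R/\Z)^m \times \{0\})$ is a closed subgroup of $(\R/\Z)^m$ of codimension at most $C$, cut out by at most $C$ linear forms with integer coefficients. Following the proof of~\Cref{thm:gtlf}, each such relation arises from a rational dependency between some $\alpha_i$ and the frequencies appearing in $B_1, \ldots, B_k$, and so expresses an $a_i$ for $i$ in a small set of \enquote{bad} indices $I \subseteq \{1, \ldots, m\}$ with $|I| \le C$ as a $\Z$-linear combination of the remaining coordinates. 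For $m \gg C$, I would select the \enquote{big} index $i^* \notin I$, set $a_{i^*} = -\delta_1$, pick a set $J \subseteq \{1, \ldots, m\} \setminus (I \cup \{i^*\})$ of cardinality approximately $\delta_1/\delta_2$ with $a_i \approx \delta_2$ for $i \in J$, set all other free coordinates to $0$, and then use an additional pool of unused free coordinates as \enquote{correction slots} to finely tune the values so that all $C$ linear relations are satisfied modulo $1$ with the resulting bad coordinates having $\R/\Z$-norm much smaller than $\delta_2$ (so that they can be absorbed into the slack of the $S_m$ condition on the small coordinates).

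The main obstacle is precisely this construction of $p^*$: orchestrating the free coordinates to simultaneously satisfy $C$ integer-linear relations while preserving the $S_m$-structure and keeping every bad coordinate extremely small in $\R/\Z$. With $p^*$ in hand, the rest is routine: use $\eps''$-density to produce $n \le N$ with $(P_m(n), B(n))$ within $\eps''$ of $p^*$; then $P_m(n) \in S_m$, and the tail $\sum_{i > m} \|n\alpha_i\|_{\R/\Z} \le n \sum_{i > m} \alpha_i \le 2N \alpha_{m+1}$ is negligible by the rapid decay of the $\alpha_i$, so $P(n) \in S_{\infty}$ and hence $n \in S_{\N}$; meanwhile $\|B_j(n)\|_{\R/\Z} < \eps$ for every $j$ gives $n \in B_{\ell}$, producing the required element of $S_{\N} \cap B_{\ell}$.
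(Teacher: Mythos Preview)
Your overall architecture matches the paper's: reduce to equi-distributed basic generalized polynomials via Leibman, pass to the subtorus $T\subseteq(\R/\Z)^{m+k}$ of codimension $C=O_k(1)$ in which $(P_m(n),B(n))$ equidistributes, locate a target point of $T$ lying in $S_m\times(-\eps,\eps)^k$, and finish with the tail bound $n\ll 1/\alpha_{m+1}$ to promote $P_m(n)\in S_m$ to $P(n)\in S_\infty$. Where you diverge is in the key step of producing that target point.

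The paper does not build $p^*$ by hand. It simply invokes the earlier theorem that $S_m$ meets every ordinary Bohr set of bounded complexity, once $m$ is large enough. The point is that the set
\[
V \;=\; \bigl\{\,a\in(\R/\Z)^m : (a,b)\in T \text{ for some } |b|<\eps_k\,\bigr\}
\]
is an open neighbourhood of the fibre $T_0$ and hence contains an ordinary Bohr set of dimension $C=O_k(1)$ and some positive width (depending on $\eps_k$ and the integer coefficients cutting out $T$). Since $S_m$ meets all such Bohr sets for $m$ large, $S_m\cap V\neq\emptyset$, and openness of $S_m$ plus density in $T$ finishes the job. The compactness step then makes the required $n$ uniform over nil-Bohr sets of a given complexity, so that the bound $n\ll 1/\alpha_{m+1}$ can be arranged.

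Your explicit construction with bad indices and correction slots is essentially an attempt to re-prove that earlier Bohr-set theorem inside the present argument, and as written it is shakier than you acknowledge. The relations cutting out $T_0$ need not express a bad $a_i$ as a $\Z$-linear combination of the others (the leading coefficient can be any integer, so $a_i$ is determined only modulo $1/q_i$), and those relations may well involve \emph{good} coordinates $a_j$ with $j<i$, not just the bad ones and the $b$'s; so once you commit to $a_{i^*}=-\delta_1$ and $a_j=\delta_2$ on $J$, the forced bad-coordinate values need not be small, and it is unclear that a handful of extra ``correction slots'' with values in $(-\delta_2,\delta_2)$ can repair $C$ integer-linear constraints simultaneously. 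None of this is fatal, but it is exactly the work that the paper's black-box citation of ``$S_m$ meets all ordinary Bohr sets of complexity $O_k(1)$'' absorbs. I would recommend replacing your hand-built $p^*$ by that citation: it is both shorter and avoids the delicate bookkeeping you flag as the main obstacle.
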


\begin{proof} Let $\eps_i$ be a sequence that goes to $0$ sufficiently quickly.  Choose $m$ sufficiently large so that $S_m$ meets all ordinary Bohr sets of complexity $O_k(1)$.  Then as $n \in \N$ ranges, the image $(P_m(n),B(n))$ will intersect $S_m \times (-\eps_k, \eps_k)^k$, because it will be dense in an $O_k(1)$-codimension subtorus of $(\R/\Z)^{m+k}$ and because $S_m$ is open in $(\R/\Z)^m$.  A priori there is no bound on such $n$ and it depends on $m$ as well as the specific choice of $O_k(1)$-complexity $B$.  However, by compactness of complexity $O_k(1)$ Nil-Bohr sets $(B_{\ell}, \eps_{\ell})$, we can find a uniform bound on $n$, and so we can find such an $n$ sufficiently smaller than $1/\alpha_{m+1}$.

Hence, we may choose an appropriate sequence of $\alpha_i$ so that for any nil-Bohr set $B$, for sufficiently large $m$ depending on $B$, as $n$ ranges but stays sufficiently small compared to $1/\alpha_{m+1}$, $(P_m(n),B(n))$ intersects $S_m \times (-\eps_k,\eps_k)^k$, because $m$ was chosen to be sufficiently large so that $S_m$ meets all ordinary Bohr sets of complexity $O_k(1)$.  

So we can find some $n$ so that $P_m(n) \in S_m$ and $n$ is sufficiently smaller than $1/\alpha_{k+1}$.  By the size bound on $n$ we have $P(n) \in S_{\infty}$ and $n \in S_{\N}$.  Since the coordinates of $B(n)$ have $\R/\Z$ norm less than $\eps_k$, $n$ is in the nil-Bohr set $B$. This concludes the proof. \end{proof}

A remark about nilmanifolds is in order.  We do not use them here, but our results could be phrased equivalently in those terms.  Section 2 of \cite{leibman} describes the correspondence between equi-distribution of generalized polynomials and the equi-distribution over nilmanifolds from \cite{l3}.  The correspondence between generalized polynomials and nilmanifolds was first proved in \cite{correspondence} and is cited as Lemma 2.3 of \cite{leibman}.  Theorem 2.9 of \cite{gt} is the quantiative version of Theorem 1.9 of \cite{gt}, the main theorem of \cite{l3}.  As written, our construction is not explicit because of the compactness arguments.  Using the quantitative results of Green-Tao \cite{gt}, one can remedy this and make all of our compactness arguments effective.  

\section{Concluding Remarks and Open Questions}

The most obvious open question here is Katznelson's question.  In view of our disproof of the higher-order version, we conjecture that the answer should be negative.

\begin{conjecture}
There is a set $S \subset \N$ which is a set of Bohr recurrence but not a set of topological recurrence.  

\end{conjecture}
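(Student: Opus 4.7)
The plan is to combine the Bohr-meeting property of $S_m$ established earlier in this section with \Cref{thm:gtli}. Given a nil-Bohr set $(B,\eps)$, first apply Theorem 0.2 of \cite{leibman} to produce equidistributed basic gen-polynomials $B_1,\dots,B_k$ and a width $\eps_k>0$ such that the nil-Bohr set they cut out is contained in $B$; it therefore suffices to find $n\in S_\N$ with $||B_i(n)||_{\R/\Z}<\eps_k$ for each $i$.

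Next, pick $m=m(\eps,B_1,\dots,B_k)$ large enough for two things at once. First, the Bohr-meeting theorem proved earlier in this section should guarantee that $S_m$ meets every ordinary Bohr set whose complexity is at most the $O_k(1)$ bound coming from the subtorus step below. Second, \Cref{thm:gtli} should apply, so that for $n$ sufficiently smaller than $1/\alpha_{m+1}$, the image of the map $n\mapsto(P_m(n),B(n))$ is $\eps_k$-dense in some subtorus $T\subset(\R/\Z)^{m+k}$ of codimension $O_k(1)$.

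Now look for a point of this image inside the open set $S_m\times(-\eps_k,\eps_k)^k$. The subtorus $T$ is defined by $O_k(1)$ integer linear relations among the $m+k$ coordinates; setting the last $k$ coordinates to $0$ and keeping the first $m$ free turns these into an ordinary Bohr condition on $(\R/\Z)^m$ of dimension and inverse-width $O_k(1)$, crucially independent of $m$. By the choice of $m$, $S_m$ meets this Bohr set, which together with $\eps_k$-density in $T$ and openness of $S_m$ produces a genuine integer $n$ with $P_m(n)\in S_m$ and $||B_i(n)||_{\R/\Z}<\eps_k$ for each $i$. A compactness argument ranging over all nil-Bohr sets of complexity $O_k(1)$ then gives a uniform upper bound on the size of the required $n$, which we make compatible with $n\ll 1/\alpha_{m+1}$ by our choice of shrinkage rate for the $\alpha_i$ in \Cref{thm:gtli}.

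Finally upgrade $P_m(n)\in S_m$ to $P(n)\in S_\infty$, i.e., $n\in S_\N$. Because $n$ is much smaller than $1/\alpha_{m+1}$, the tail coordinates $\{n\alpha_j\}$ for $j>m$ contribute only a tiny $\ell^1$ perturbation to $P(n)$; this is absorbed precisely by the gap $\eta_m$ between the defining parameters of $S_m$ and $S_\infty=S$, which is why the $\eta_m$'s were introduced. Simultaneously, $||B_i(n)||_{\R/\Z}<\eps_k$ places $n$ in the nil-Bohr set $B$. I expect the main obstacles to be (i) verifying that the complexity of the Bohr condition induced on the first $m$ coordinates by the subtorus $T$ is genuinely $O_k(1)$ and independent of $m$, which requires that Leibman's $O_k(1)$-codimension subtorus be cut out by integer forms of bounded coefficients, and (ii) the quantitative bookkeeping needed to synchronise the $\alpha_i$ shrinkage, the $\eta_m$ slack, and the final size bound on $n$.
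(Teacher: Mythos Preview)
The statement you were asked to prove is Conjecture~3.1, and the paper does \emph{not} prove it: it is Katznelson's Question itself, explicitly left open in Section~3. Your proposal is not a proof of this conjecture at all. What you have written is (a sketch of) the paper's proof of Theorem~2.7, namely that $S_\N$ intersects every nil-Bohr set. That establishes that $S_\N$ is a set of $\mathrm{Nil}_d$-Bohr recurrence, and in particular a set of Bohr recurrence. But the conjecture also requires $S_\N$ to fail to be a set of \emph{topological recurrence}, i.e., the Cayley graph $\mathrm{Cay}(\Z,S_\N)$ must have finite chromatic number.

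The paper only shows that $S_\N$ is not a set of topological \emph{$2$-recurrence}: the coloring in Definition~2.4 forbids monochromatic three-term progressions $x,x+s,x+2s$ with $s\in S_\N$. This says nothing about monochromatic \emph{pairs} $x,x+s$; indeed, failing $2$-recurrence is strictly weaker than failing $1$-recurrence. The paper's concluding remarks explicitly discuss why the present construction does not settle Katznelson's Question, pointing to the ``cyclic shift'' obstacle that destroys the obvious attempt to adapt the argument. So your proposal, even if every step went through, would reprove a piece of Theorem~\ref{thm:general} and leave the actual conjecture untouched.
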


As usual we also state the combinatorial form of this conjecture.

\begin{conjecture}

There is $S \subset \N$ so that the Cayley graph on $\Z$ generated by $S$ has a finite chromatic number, but does not admit a finite coloring via Bohr sets.  

\end{conjecture}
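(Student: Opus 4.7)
The plan is to adapt the $\ell^1(\N;\R/\Z)$ construction of \Cref{thm:general}, replacing the avoidance of three-term progressions by avoidance of Cayley edges, while fully recognizing that this is precisely Katznelson's long-standing open question. I would keep $S_\infty$ essentially as defined in the paper --- one coordinate near $-\delta_1$, the others summing in $\ell^1$ to approximately $\delta_1$ --- and project via $n\mapsto(\{n\alpha_1\},\{n\alpha_2\},\ldots)$ for a sufficiently rapidly decaying sequence $\alpha_i$. The Bohr-intersection argument from the preceding section carries over verbatim, so the image $S_\N\subset\Z$ is automatically a set of Bohr recurrence; this is the easy half.

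The substantive half is to build a finite coloring of $\ell^1(\N;\R/\Z)$, or rather of the dense orbit of $\Z$ inside it, such that no edge $\{x,x+s\}$ with $s\in S_\infty$ is monochromatic. Unlike the 3-AP case, where curvature of the unit circle produced a second difference $e(x_i)-2e(x_i+s_i)+e(x_i+2s_i)$ of size $\Theta(\delta_1^2)$ dominating an error of $O(\delta_1\delta_2)$, here there is no second-difference cancellation to exploit: we have only a first difference, and it is \emph{Bohr-small} by construction. My first attempt would be to color $x$ by recording the positions and approximate values, on a $\delta_2$-grid, of its largest few coordinates, since any $s\in S_\infty$ carries a distinctive spike of magnitude $\delta_1$ at a single coordinate while all others are $O(\delta_2)$; adding such an $s$ should typically create, destroy, or relocate a spike of $x$ in a way the coloring can detect.

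The main obstacle, which is exactly why the conjecture is open, is that any finite coloring that pulls back from a homomorphism to a finite abelian group is immediately defeated by the Bohr-intersection property of $S_\infty$; so the coloring must be genuinely non-homomorphic, and it must detect the single $\delta_1$-spike in $s$ without being thwarted by the very spreading trick $\sum_{j\neq i}(\delta_2 e_j - \delta_2 e_i)$ used to guarantee Bohr density. Concretely, within $S_\infty$ the adversary can place the spike at any coordinate and compensate elsewhere, so any candidate coloring must be invariant under a large family of such cancellations while still separating $x$ from $x+s$. I see two possible avenues: (a) introduce a genuinely nonlinear statistic on $\ell^1$ sequences, such as a winding number or a K\v{r}\'{\i}\v{z}-style Hamming-weight invariant on the coordinate support of $x$; or (b) abandon the $\ell^1$ framework and, inspired by K\v{r}\'{\i}\v{z}'s construction on $(\Z/2\Z)^n$, build $S$ directly from the edges of a high-chromatic-number Cayley graph whose generating set is projected in a Bohr-compatible way. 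Either route requires a mechanism producing chromatic obstructions invisible to linear Fourier analysis, which is where an honest attempt would need a substantively new idea.
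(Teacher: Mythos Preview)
The statement you are addressing is a \emph{conjecture} in the paper, not a theorem; the paper does not prove it and explicitly identifies it as the combinatorial form of Katznelson's question, which remains open. There is therefore no ``paper's own proof'' to compare your proposal against.

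Your write-up is honest about this: you flag the problem as open, you correctly observe that the Bohr-intersection half carries over from the paper's construction, and you correctly isolate the real obstruction---namely that for single recurrence you only get a first difference, and the very mechanism that guarantees Bohr density (writing $s$ as a telescoping sum of near-cancelling pairs) defeats any coloring that factors through a homomorphism. But what you have written is a research plan, not a proof. Neither avenue (a) nor (b) is carried to a conclusion, and the paper itself explains in its concluding remarks why the obvious extension of the $\ell^1$ idea fails: any coordinate-wise additive statistic $\sum_i g(a_i)$ is killed by the cyclic-shift obstacle, so the coloring must break a symmetry that your proposal has not yet broken. Until you supply that mechanism, this remains a sketch of the difficulty rather than a proof of the conjecture.
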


One reason to believe Katznelson's question has a positive answer is that a \enquote{zero density exception} version is known due to \cite{folner}.  In particular, if $A$ is a dense subset of $\Z$, then $S-S$ contains a $1-o(1)$ proportion of a Bohr set.  Katznelson's question is equivalent to the same claim except with $A$ syndetic and without the $o(1)$ proportion, see for instance \cite{gkr}.  A similar \enquote{zero density exception} of the higher-order Katznelson was proved in \cite{hsy}, but we have proved here higher-order Katznelson has a negative answer.

We generally believe that if Katznelson's question has a positive answer, the proof somehow relies on topology.  Indeed Glasscock, Koutsogiannis, and Richter \cite{gkr} prove the special case of skew products over $(\R/\Z)^2$ by using the intermediate value theorem, and \cite{kriz} relies on the topological lower bound for the chromatic number of the Kneser graph.  We also note here that the construction in this paper can be expressed as coming from a skew product over $(\R/\Z)^5$.

We mention one important obstacle to extending something like our construction to settle the full Katznelson's question.  Let $G$ be an Abelian group.  Then an element of $G^m$ in the generating set with sum of coordinates $0$ can be written in the form $(a_1-a_2, a_2-a_3, \cdots, a_{m-1}-a_m, a_m-a_1)$, so the cyclic shifts $(a_1, a_2, a_3, \cdots, a_n)$ and $(a_2, a_3, \cdots, a_n, a_1)$.  Hence, we cannot simply take a function of each coordinate and add it up the way we considered $(e(x)-1)$ here.  This kind of \enquote{cyclic shift} obstacle destroys many potential counterexamples to Katznelson's question, and any kind of high-dimensional projection must break this sort of symmetry.  Nonetheless, we believe that our construction represents substantial progress towards Katznelson's Question.

Another interesting direction is to consider the finite field setting.  One can also consider analogues of Katznelson's question over positive characteristic.  This was asked by Ben Green over characteristic $2$ (\cite{ben}, Problems 52 and 53) and also by John Griesmer \cite{jg} over more general characteristic $p$.  We can similarly ask the higher-order version of Katznelson's question in characteristic $p$ for $p>d$.  We state two such versions for $3$-term progressions.

\begin{question}~\label{cyclic}
	Let $p_1, p_2, \cdots, p_n, \cdots \ge 3$ be an increasing sequence of primes, and say we have a sequence $G_i$ of Cayley graphs on $\Z/p_i$ with uniformly bounded chromatic number $C$.  Is there a uniform constant $K$ so that the generating set of $G_i$ is contained in the union of $K$ sets $||P_j(x)/p||_{\R/\Z}>1/K$, where $P_j$ are polynomials on $\Z/p_i$ of degree at most $2$?
\end{question}

In other words, can we always find a bounded number $K$ of quadratics $P_j(x)$ so that any $x$ in the generating set is mapped by at least one of the quadratics to a point that is not in the interval $(-p/K, p/K)$?

\begin{question}~\label{padic}
	Let $p \ge 3$ be a fixed prime, and say we have a sequence of Cayley graphs $G_i$ on $(\Z/p)^n$ for a sequence of $n=n_i$ with uniformly bounded chromatic number $C$.  Is there a uniform constant $K$ so that the generating set of $G_i$ is contained in the union of $K$ sets $P_j(x) \neq 0$, where $P_j$ are quadratic forms on $(\Z/p)^n$ of degree at most $2$?
\end{question}

A few comments are in order about why our example does not immediately resolve the above questions.  Our example in some sense relies on the infinite bracket polynomial $\sum_i \{na_i\}^2$.  In modulo $1$, $x$ and $x^2$ are independent.  However, in modulo $p$ the value of $n$ determines the value of $n^2$, and $\sum_i (na_i \mod p)^2=n^2(\sum a_i^2) \mod p$.  Our example also relies heavily on the existence of a norm, which is less straightforward in positive characteristic, though one can consider the $p$-adic norm or $||x/p||_{\R/\Z}$.  In general, it is also true that $\Z/p$ and $\R/\Z$ \enquote{look different} from the standpoint of arithmetic progressions of length $3$ or greater.  See \cite{cs} for discussion about this.

One can also ask if any $S$ which is not a set of nil-Bohr has some kind of \enquote{quadratic structure}.  We generally suspect the answer is no, and in fact that we do not think it is possible to write down a \enquote{nice} set of criteria that determines whether $S$ is a set of multiple recurrence or if $S$ is a set of single recurrence.  There is likely some logical or computability theoretic separation between nil-Bohr recurrence and multiple recurrence, as well as between Bohr recurrence and topological recurrence.  We state a complexity theoretic conjecture in the finitary setting, which implies that the answer to Katznelson's Question is negative in a strong sense.  One could formulate a similar finitary conjecture for the higher-order version.

\begin{conjecture}~\label{conj:ugc}
	There exists an absolute constant $c$ so that for every constant $C$, it is $NP$-hard to distinguish if an Abelian Cayley Graph that has chromatic number $\le c$ or $\ge C$. \end{conjecture}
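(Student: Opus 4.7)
The plan is to seek a PCP-type gap-preserving reduction where the output instance is an abelian Cayley graph. A natural starting point is the Unique Games Conjecture or, unconditionally, the $2$-to-$2$ Games Theorem of Khot--Minzer--Safra, both of which already yield $c$-versus-$C$ coloring hardness for general graphs. The task is to upgrade such a reduction so that the resulting graph is $Cay(G,S)$ for some abelian $G$, for instance $\Z/N$ for the cyclic variant (as in \Cref{cyclic}) or $(\Z/2)^n$ for the finite-field variant (as in \Cref{padic}).

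First I would fix the ambient group $G$ and look for a generating set $S \subset G$ that encodes a hard Label Cover or Unique Games instance. Completeness would be realized by ``dictator''-type colorings inherited from a satisfying assignment: pick a fixed $c$-coloring of a small Cayley gadget and lift it through the reduction. Soundness would be argued by a long-code or Kneser-style test on $G$, using Fourier analysis together with hypercontractive noise operators, roughly along the Dinur--Mossel--Regev line for graph coloring. The K\v{r}\'{\i}\v{z}--Kneser machinery discussed in the introduction to this paper is directly relevant here, since it already produces abelian Cayley graphs of large chromatic number from Hamming-weight constraints on $(\Z/2)^n$.

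The main obstacle---and the reason we state this as a conjecture rather than a theorem---is that the Cayley condition imposes vertex-transitivity and a very rigid global structure that standard PCP gadgets lack. One would need a ``symmetrization'' step morally analogous to K\v{r}\'{\i}\v{z}'s projection trick used in our main construction: take a hard non-symmetric instance and embed it into an abelian Cayley graph via tensor powers or a carefully chosen homomorphism, in a way that preserves the chromatic gap. A secondary difficulty is that the conjecture asks for plain NP-hardness rather than UGC-hardness; here I would first aim for the conditional version and only later attempt to remove the assumption via the $2$-to-$2$ Games Theorem or a direct PCP. I would also expect the cyclic version on $\Z/N$ to be strictly harder than the $(\Z/2)^n$ version, since the latter inherits more structure from K\v{r}\'{\i}\v{z}'s construction and admits a cleaner Fourier-analytic soundness analysis.
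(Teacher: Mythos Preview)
The statement you are addressing is \emph{Conjecture}~\ref{conj:ugc}, and the paper does not prove it. It appears in the Concluding Remarks section as an explicitly open problem; the only supporting discussion in the paper is the observation that a positive answer to Katznelson's Question would give a polynomial-time algorithm for this approximate coloring problem (by brute-force search over bounded-complexity Bohr colorings), so that NP-hardness would rule out a positive answer. There is therefore no proof in the paper to compare your proposal against.

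Your write-up is not a proof either, and to your credit you say so: you outline a plausible PCP/UGC-based research program and then name the central obstacle (forcing the output of a gap-preserving reduction to be vertex-transitive and abelian-Cayley) as the reason the statement remains a conjecture. That is a reasonable informal discussion of where the difficulty lies, but nothing in it constitutes an argument that could be checked, and the key step --- a symmetrization or embedding that turns a hard Label Cover instance into an abelian Cayley graph while preserving the chromatic gap --- is exactly the missing idea, not something you have supplied. So as a proof proposal there is a genuine gap: the whole content of the conjecture is the step you flag as unresolved.
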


For an Abelian Cayley graph, we can determine in polynomial time whether there exists a coloring by Bohr sets using $<C$ colors by simply checking all such colorings, and in \cite{katznelson} it is proved that Katznelson's Question is equivalent to the statement that the number of colors necessary in such a coloring is bounded a function of the chromatic number.  Hence, if Katznelson's Question has a positive answer, there is a polynomial algorithm for this \enquote{Approximate Graph Coloring} question for Abelian Cayley graphs.  For general graphs, such problems are known to be NP-hard conditional on a variant of the Unique Games Conjecture, which gives us reason to believe~\Cref{conj:ugc}.  See \cite{cz} and the references there for more discussion of Approximate Graph Coloring.

We do not make any attempt at all to optimize the number of colors used in our construction.  With our ideas it seems difficult to beat the bounds from the unit distance in the plane.  Let us also mention that while it is simple to check that at least $3$ colors are necessary for any example answering Katznelson's Question in the negative \cite{gkr}, in this setting of $3$-term and longer progressions we do not even know if we can have a counterexample using only $2$ colors.

Concerning counterexamples with only $2$ colors, we also mention here the $2$-large versus large conjecture of Brown, Graham, and Landman \cite{bgl}.  A set $S$ is \emph{large} if it is a set of $d$-topological recurrence for all $d$, so that restricted van der Waerden with restricted common difference $s \in S$ is true for arbitrarily long arithmetic progressions.  A set $S$ is \emph{2-large} if any 2-coloring of $\N$ has arbitrarily large arithmetic progressions with common difference in $S$.  Brown, Graham, and Landman asked if $2$-large sets are large.  We have constructed the first example of a set that is not large but meets every nil-Bohr set.  We conjecture that it is necessary to do so to refute the $2$-large versus large conjecture.

\begin{conjecture}
	If $S$ is not a set of $d$-nilBohr recurrence for some $d$, then $S$ is not $2$-large.
\end{conjecture}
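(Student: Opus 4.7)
The plan is to prove the contrapositive: starting from an obstruction witnessing that $S$ is not a set of $Nil_d$-Bohr recurrence for some $d$, produce a $2$-coloring of $\N$ and a length $k$ so that no $k$-term AP with common difference in $S$ is monochromatic.

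First I would extract a clean obstruction. By the simple combinatorial definition of nil-Bohr recurrence together with Leibman's theory of basic generalized polynomials used earlier in the paper, failure of $Nil_d$-Bohr recurrence yields a finite family $P_1,\dots,P_r$ of basic generalized polynomials of degree at most $d$ and an $\eps>0$ such that for every $s\in S$ at least one $\|P_i(s)\|_{\R/\Z}\ge\eps$. Leibman's equi-distribution theorem guarantees that the map $n\mapsto(P_1(n),\dots,P_r(n))$ equi-distributes on some subtorus $T\subset(\R/\Z)^r$.

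I would then use this data to build the coloring. Fix a Borel partition $T=A\sqcup B$ whose boundary is adapted to $\eps$ — for example, a level set $\{Q\ge 0\}$ of a smooth function $Q:T\to\R$ whose oscillation scale is much smaller than $\eps$ — and define $\chi(n)=0$ if $(P_1(n),\dots,P_r(n))\in A$ and $\chi(n)=1$ otherwise.

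The heart of the argument is to show that no length-$k$ AP with common difference $s\in S$ is monochromatic, once $k$ is large enough depending only on $d,r,\eps$. For the linear case $P_i(n)=\alpha n$, the values $P_i(n+js)$ form a genuine arithmetic progression on the circle with step $\|\alpha s\|\ge\eps$, so any $k\gg 1/\eps$ forces the AP to cross any half-circle partition; this handles the Bohr (i.e.\ $d=1$) case of the conjecture outright. For higher-degree generalized polynomials one would need an analogous statement for the sequence $j\mapsto P_i(n+js)$: some quantitative equi-distribution or oscillation bound along APs that refines Leibman's pointwise theorem.

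The main obstacle is exactly this last step. Because basic generalized polynomials involve nested floor functions, $j\mapsto P_i(n+js)$ is not a polynomial in $j$ and its finite differences do not telescope; this is precisely the phenomenon that makes the paper's many-color construction of \Cref{thm:general} succeed. A plausible route is to prove that for generic $n$, the AP $P_i(n),P_i(n+s),\dots,P_i(n+(k-1)s)$ equi-distributes on a subtorus whose image under $Q$ contains points of both signs, which would require a Green--Tao style quantitative equi-distribution statement applied uniformly over the arithmetic progression. A more flexible alternative is a probabilistic construction: average over random partitions of $T$ (for instance, random thresholdings of random trigonometric polynomials on $T$) and use a union bound to show that the expected density of monochromatic $k$-APs can be made strictly less than $1$, deterministically producing one such $2$-coloring. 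Either route requires genuinely new input beyond the Bohr case, which is consistent with the conjecture having resisted proof so far.
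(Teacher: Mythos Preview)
This statement is presented in the paper as an open \emph{conjecture} in the concluding remarks, not as a theorem; the paper offers no proof. So there is no ``paper's own proof'' to compare against, and your proposal should be read as an attempt to prove something the authors explicitly leave open.

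Your outline has a genuine gap already in the case you claim to handle ``outright''. Failure of $Nil_d$-Bohr recurrence (even for $d=1$) gives a \emph{family} $P_1,\dots,P_r$ and an $\eps$ such that for every $s\in S$ \emph{some} $\|P_i(s)\|_{\R/\Z}\ge\eps$, with the index $i$ depending on $s$. Your half-circle argument shows that, for each fixed $i$, the one-dimensional projection of the AP crosses the half-circle; but the $2$-coloring you propose is a single partition $A\sqcup B$ of the subtorus $T\subset(\R/\Z)^r$, and crossing a half-circle in the $i$th coordinate does not force crossing the partition of $T$. Concretely, for $r=2$ the step could be $(\tfrac12,\tfrac12)$, which certainly has $\|\cdot\|_\infty\ge\eps$, yet the parity-of-halves coloring $\lfloor 2\{y_1\}\rfloor+\lfloor 2\{y_2\}\rfloor\bmod 2$ is constant along this AP. More generally, any integer linear functional $\beta\in\Z^r$ has nontrivial kernel in $(\R/\Z)^r$ once $r\ge 2$, so ``project to one circle and cut in half'' cannot work uniformly in $s$. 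Producing a single $2$-coloring of $(\R/\Z)^r$ (or of $\N$) that defeats all long APs whose toral step merely has one large coordinate is already a nontrivial combinatorial problem, and you have not addressed it.

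For $d\ge 2$ you correctly identify the core obstruction --- that $j\mapsto P_i(n+js)$ is not polynomial and the finite differences do not telescope --- and you yourself concede that both proposed routes (uniform Green--Tao style equidistribution along APs, or a probabilistic partition) would require genuinely new input. That is an honest assessment, but it means the proposal is a sketch of possible strategies rather than a proof. In particular, the probabilistic route faces an uncountable union bound over starting points and step vectors, and the equidistribution route must be uniform in $n$ as well as $s$, neither of which follows from the results cited in the paper.
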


We also ask whether these sets, or some similarly constructed modifications, are $2$-large.

\begin{question}
	Are the sets $S$ of the kind constructed here $2$-large?
\end{question}

Another potential question we can ask is about the polynomial version of Katznelson's question.  What if we consider the Bergelson-Leibman polynomial van der Waerden Theorem \cite{blvdw} with restricted differences?  We pose one such question.

\begin{question}
	For what sets $S$ does an arbitrary finite coloring of $\N$ always contain a monochromatic $\{x, x+d, x+d^2\}$ with $d \in S$?
\end{question}

Progress on any of these questions would be interesting and would contribute to our general understanding of recurrence.

\section{Acknowledgments}

We thank GPT-5, Zachary Chase, Yang P. Liu, and Siming Tu for helpful comments.

\end{document}